	\newtheorem{theorem}{Theorem}[section]
	\newtheorem{lemma}{Lemma}[section]
	\newtheorem{claim}{Claim}[section]
	\newtheorem{algorithm}{Algorithm}[section]
	\theoremstyle{definition}
	\newtheorem{definition}{Definition}[section]
	\theoremstyle{remark}
	\numberwithin{equation}{section}
\begin{document}


%
%
%
%
\def\put#1 at (#2,#3){\vbox to 0pt{\kern#3
                     \dimen2=#2
                     \hbox{\kern\dimen2{#1}}\vss}\nointerlineskip}

\def\dim{\hbox{Dim}}
\def\rank{\hbox{rank}}
\def\ret{\hfill \cr}
\def\qu#1{\hskip #1em\relax}
\def\Cal{\mathcal}
%
%
\def\bbc{\Bbb C}
%
%
\def\bbr{\Bbb R}
%
%
\def\intersect{\mathop{\cap}\displaylimits}
%
%
\def\sinc{\hbox{sinc}}
%
%
\def\twoprime{{\prime\prime}}
%
%
\def\threeprime{{\prime\prime\prime}}
%
%
\def\union{\mathop{\cup}\displaylimits}
%
%
\def\tten#1{$\times{10}^{#1}\qu{1}$}
%
%

\def\tcap#1#2{
\vskip -25pt
{\par\centering \small {\bf Table #1} #2\par}
\vskip 20pt
}
\def\tcapns#1#2{
{\par\centering \small {\bf Table #1} #2\par}
}


\def\smartqed{}

\newcommand{\sbt[1]}{\,\begin{picture}(-1,1)(-1,-#1)\circle*{#1}\end{picture}\ }

\newcommand{\abs}[1]{\lvert#1\rvert}

\newcommand{\blankbox}[2]{%
  \parbox{\columnwidth}{\centering
    \setlength{\fboxsep}{0pt}%
    \fbox{\raisebox{0pt}[#2]{\hspace{#1}}}%
  }%
}

\smartqed

\title{A Class of Algorithms for Quadratic Minimization}

\author{Marc Stromberg\break{\sl email: mstromberg@psmfc.org}}

\maketitle

\abstract{Certain problems in quadratic minimization can be reduced to finding the point $x$ of a polyhedron ${\Cal P}$ that minimizes the distance $\|x-p\|$ for some $p\notin {\Cal P}$. This amounts to a search for the appropriate face $F$ of ${\Cal P}$ for which the minimizing point is the projection of $p$ onto $F$. We present a class of algorithms for finding the face $F$ and the corresponding minimizing point $x\in {\Cal P}$, then a number of examples using those methods.}

\vskip 10 pt
\noindent{\bf 2020 Mathematics Subject Classification: }  65K99, 90C20 \hfill\break

\section{Introduction}

Let ${\Cal P}$ be a (convex) polyhedron. For faces $F$, $K$ of ${\Cal P}$ or ${\Cal P}$ itself we will write $F\leq K$ if $F$ is a face of $K$ and $F<K$ if $F$ is a proper face of $K$. If $X$ is any set we will write ${\Cal A}(X)$ for the affine hull of $X$ and ${\Cal C}(X)$ for the convex hull of $X$. We will assume for this discussion that ${\Cal P}$ has a representation ${\Cal P}=\{x\in\bbr^n\mid Ax \leq b\}$ where  the system in question is irredundant (see \cite{as-tlip} for a full discussion of this type of representation). We will also assume that the system $Ax \leq b$ has no implicit equalities, so that ${\Cal P}$ is full-dimensional in the ambient space $\bbr^n$ (or equivalently, restricting things to ${\Cal A}({\Cal P})$). 
This is not a real restriction because the entire discussion can be situated in the affine subspace determined by implicit equalities, in which ${\Cal P}$ is full-dimensional, if there are such equalities, and for $p\notin {\Cal P}$ we can restrict to the projection of $p$ onto this affine subspace. In any case, {\lq the full dimensional case\rq}  will be assumed to mean that $p\in{\Cal A}({\Cal P})$, but it is easy to adapt Algorithm \ref{al:qm1p1} below to include the nonfull-dimensional case.
The main results will be stated in terms of the following definition.

\begin{definition}    
\label{de:qm1p1}
Let $e\in F < {\Cal P}$ and let $p \notin{\Cal P}$. An {\it escape} from $e$ toward $p$ along $F$ is a point $e^\prime \neq e$ such that \begin{equation}e^\prime = e+t_0(\pi_F-e)\label{eq:qm1p0}\end{equation} where $\pi_F$ is the orthogonal projection of $p$ onto ${\Cal A}(F)$ and where $t_0\in (0, 1]$ is largest such that $e+t_0(\pi_F-e)\in {\Cal P}$.
\end{definition}

If there are no escapes from $e$ then there is no path in ${\Cal P}$, at least locally, from $e$ to a point nearer to $p$. This intuitively suggests that $e$ minimizes the distance to $p$. The proof of Theorem \ref{th:qm1p1} below provides a basis for the intuition. 

Given a point $e\in {\Cal P}$ and $p\notin {\Cal P}$ we will say that $e$ is {\it $p$-visible} if there are no points of ${\Cal P}$ on the open segment $(e, p)$. Given our full-dimension restriction, points of ${\Cal P}$ that are $p$-visible for some $p\notin {\Cal P}$ will necessarily belong to some facet of ${\Cal P}$. 

The proof of Theorem \ref{th:qm1p1} will depend on the following constructions. We assume that $p\notin{\Cal P}$ is fixed but otherwise arbitrary. Let $\nu\in {\Cal P}$ be the point of ${\Cal P}$ nearest to $p$. If $x \neq \nu$ and $x\in {\Cal P}$ is $p$-visible and if $t\in [0, 1]$ let $\ell_{x, \nu}(t) = (1-t)x+t\nu$, and define $\rho_{x, \nu}(t)$ as the point $(1-s)\ell_{x, \nu}(t) + sp$ where $s\in[0,1]$ is largest such that $(1-s)\ell_{x, \nu}(t) + sp\in {\Cal P}$. The set of points $\rho_{x, \nu}(t)$ for $t\in [0,1]$ will be called the {\it $p$-visible path} from $x$ to $\nu$. Finally let $\delta_{x, \nu}(t) = \|\rho_{x, \nu}(t)-p\|$ for each $t$.  This definition does not require ${\Cal P}$ to be full-dimensional, i.e.,  it applies if $p$ does not lie in ${\Cal A}({\Cal P})$. If $\dim({\Cal P}) = 1$ then the $p$-visible path for $p\in {\Cal A}({\Cal P})$ degenerates to a point since it is not possible to pick $x\neq \nu$ such that both are $p$-visible.
\begin{lemma}
\label{le:qm1p1}
The function $\delta_{x, \nu}(t)$ is  continuous and strictly decreasing on $[0,1]$.\end{lemma}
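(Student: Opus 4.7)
The plan is to treat continuity and strict decrease separately; both exploit the explicit representation ${\Cal P} = \{y : Ay \leq b\}$. For continuity, since $p \notin {\Cal P}$ some row $a_i$ of $A$ satisfies $a_i \cdot p > b_i$; for each such $i$, the $i$th constraint applied to $(1-s)\ell_{x,\nu}(t) + sp$ reduces to $s \leq s_i(t) := (b_i - a_i\cdot\ell_{x,\nu}(t))/(a_i \cdot p - a_i\cdot\ell_{x,\nu}(t))$, whose denominator is bounded below by $a_i \cdot p - b_i > 0$. Each $s_i$ is therefore continuous in $t$, and the maximal parameter in the definition of $\rho_{x,\nu}(t)$ equals $s(t) = \min_i s_i(t)$ taken over the violating $i$; it lies in $[0,1)$ and is continuous. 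Hence $\rho_{x,\nu}(t)$ and $\delta_{x,\nu}(t) = (1-s(t))\|\ell_{x,\nu}(t) - p\|$ are continuous on $[0,1]$.

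For strict decrease, fix $0 \leq t_1 < t_2 \leq 1$ and set $y_1 = \rho_{x,\nu}(t_1)$, $s_0 = s(t_1)$. First I would show $y_1 \neq \nu$: the optimality condition $(x-\nu)\cdot(p-\nu) \leq 0$ together with the $p$-visibility of $x$ rules out collinearity of $x,\nu,p$, since collinearity would force $\nu$ into the open segment $(x,p)$. Because $\ell_{x,\nu}(t_1) - \nu$ is a scalar multiple of $x - \nu$, the points $\ell_{x,\nu}(t_1), \nu, p$ are non-collinear for $t_1 < 1$, so the segment $[\ell_{x,\nu}(t_1), p]$ does not contain $\nu$, giving $y_1 \neq \nu$. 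Next, consider the path $z(u) = (1-u) y_1 + u \nu \in {\Cal P}$ for $u \in [0,1]$. The squared distance $\|z(u) - p\|^2$ is a strictly convex quadratic in $u$ with leading coefficient $\|\nu - y_1\|^2 > 0$, and its unconstrained minimizer $u^*$ must satisfy $u^* \geq 1$ (otherwise $z(u^*) \in {\Cal P}$ would be strictly closer to $p$ than $\nu$). Hence $u \mapsto \|z(u) - p\|$ is strictly decreasing on $[0,1]$.

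The final step links $z(u)$ back to the family $\rho_{x,\nu}(\cdot)$. Writing $z(u)$ as a convex combination of the affinely independent points $x, \nu, p$ and matching coefficients with $(1-s')\ell_{x,\nu}(t') + s' p$ yields
\begin{equation*}
s' = (1-u)s_0, \qquad t'(u) = 1 - \frac{(1-u)(1-s_0)(1-t_1)}{1 - s_0 + u s_0},
\end{equation*}
with $t'(0) = t_1$, $t'(1) = 1$, and $dt'/du > 0$; so $t' : [0,1] \to [t_1,1]$ is a continuous increasing bijection. Since $z(u) \in {\Cal P}$ lies at parameter $s'$ on the segment $[\ell_{x,\nu}(t'(u)), p]$, we have $s(t'(u)) \geq s'$, and hence $\delta_{x,\nu}(t'(u)) \leq (1-s')\|\ell_{x,\nu}(t'(u)) - p\| = \|z(u) - p\|$. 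Choosing $u \in (0,1]$ with $t'(u) = t_2$ yields $\delta_{x,\nu}(t_2) \leq \|z(u) - p\| < \|z(0) - p\| = \delta_{x,\nu}(t_1)$. The main obstacle is precisely this reparameterization: the strict decrease along $z(u)$ follows routinely from the uniqueness of $\nu$, but transferring it to a statement about $\delta_{x,\nu}$ for the correct $t$-values requires the realization that each $z(u)$ serves as a certificate forcing the lower bound $s(t'(u)) \geq s'$, which the explicit linear-fractional formula above makes precise.
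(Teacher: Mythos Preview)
Your argument is correct and takes a genuinely different route from the paper's. The paper reduces everything to the two–dimensional polytope ${\Cal P}_{x,\nu}={\Cal C}(\{x,\nu,p\})\cap{\Cal P}$, identifies the $p$-visible path with a chain of boundary edges $v_0=x,v_1,\dots,v_m=\nu$, and then runs a law–of–cosines computation on each edge together with an inductive monotonicity of the angles $\theta_i=\angle v_{i-1}v_ip$ to force strict decrease on every segment. Continuity and piecewise differentiability fall out of that picture.

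Your proof avoids the 2D picture entirely. Continuity comes from the explicit formula $s(t)=\min_i s_i(t)$ over the constraints violated by $p$, which is cleaner than what the paper does. For strict decrease you use the variational inequality $(y-\nu)\cdot(p-\nu)\le 0$ directly: the segment $[\rho_{x,\nu}(t_1),\nu]$ has strictly decreasing distance to $p$, and your linear-fractional reparameterization shows that this segment sweeps out witnesses $z(u)\in{\Cal P}$ sitting on the ray $[\ell_{x,\nu}(t'),p]$ for every $t'\in[t_1,1]$, forcing $s(t')\ge s'$ and hence $\delta_{x,\nu}(t')\le\|z(u)-p\|$. This is an elegant trick: it replaces the edge-by-edge trigonometry with a single convexity/optimality argument, and it works uniformly without splitting into the one-dimensional and two-dimensional cases.

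The trade-off is that the paper's approach yields extra structural information it later relies on: the $p$-visible path is piecewise linear and its segments lie in facets of ${\Cal P}$ (used in the proof of Claim~\ref{cl:clmp1} inside Theorem~\ref{th:qm1p1}). Your argument proves the lemma as stated but does not by itself exhibit that facet structure, so if you intend to feed into the paper's later proofs you would still need the observation, made separately in the paper, that each linear piece of the path lies in a facet of ${\Cal P}$.
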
\begin{proof}  If ${\Cal P}$ is one-dimensional then a $p$-visible path is only nondegenerate if $p\notin {\Cal A}({\Cal P})$, in which case it is just the segment from $x$ to $\nu$. In general we may consider the polyhedron consisting of ${\Cal P}_{x, \nu} = {\Cal C}(\{x, \nu, p\}) \intersect {\Cal P}$, in other words, the intersection of ${\Cal P}$ with the triangle (i.e., polytope) determined by $x, \nu, p$ (these points are not colinear except in the one-dimensional case). This is an at most two-dimensional polytope whose facets are line segments, or it reduces to the line segment $[x, \nu]$. In the latter case, $\nu$ is the nearest point to $p$ along the segment, so by a law of cosines argument $\delta_{x, \nu}(t)$ is strictly decreasing from $x$ to $\nu$. Otherwise, ${\Cal P}_{x, \nu}$ is two-dimensional and it is clear that the $p$-visible path from $x$ to $\nu$ consists of the piecewise linear path formed from facets and vertices of ${\Cal P}_{x, \nu}$ connecting $x$ and $\nu$ along its boundary. Specifically, the $p$-visible path from $x$ to $\nu$ consists of line segments between vertices $v_0=x, v_1, \ldots, v_m=\nu$ of ${\Cal P}_{x, \nu}$ for some $m\geq 1$. Consider the triangle $\{v_{m-1}, v_m, p\}$. For $y\in [v_{m_1}, v_m]$ as $y$ moves toward $v_m = \nu$, 
the length of the side $yp$ of triangle $\{y, \nu, p\}$ is strictly decreasing by applying the law of cosines. Explicitly if $y=y(t) = (1-t)v_{m-1}+tv_m$ and if $\theta_m$ designates the angle $v_{m-1} v_m p$ then setting $D_t = \|y(t)-p\|$  and $\delta = \|v_m-p\|$ we have
$$D_t^2 - \delta^2 =  (1-t)^2\|v_{m-1}-v_m\|^2 -2\delta(1-t)\|v_{m-1}-v_m\|\cos \theta_m\geq 0,$$
which shows that $$(1-t)\|v_{m-1}-v_m\|-2\|v_{m-1}-v_m\|\|v_m-p\|\cos \theta_m \geq 0$$
for all $t\in(0,1)$ thus $\cos \theta_m \leq 0$, so in fact $\pi/2\leq\theta_m< \pi$ which is consistent with intuition in any case, the second inequality in force because the points under consideration are $p$-visible.
Then
$$D_tD_t^\prime = \delta\|v_{m-1}-v_m\|\cos\theta_m - (1-t)\|v_m-v_{m-1}\|^2<0,$$ 
which shows $D_t$ is strictly decreasing for $t\in (0,1)$. Proceeding by  induction, the angle $\theta_{i-1}$ (angle $v_{i-2} v_{i-1} p$) is necessarily larger than the angle $\theta_i$ (angle $v_{i-1} v_i p$) for $i=2, \ldots, m$ for any of these that exist, or the convexity of ${\Cal P}_{x, \nu}$ would be violated. A similar argument now using the fact that $\theta_{i-1} > \theta_i$ for $i=1, \ldots, m$ shows that $\|y-p\|$ is strictly decreasing as a function of $t$ for $y=y(t)=(1-t)v_{i-1}+tv_i$ for $i=1, \ldots, m$, so ultimately $\delta_{x, \nu}(t)$ is strictly decreasing, continuous and even piecewise differentiable by construction.\end{proof}  

We note that the $p$-visible path from $x$ to $\nu$ consists of the facets of ${\Cal P}_{x, \nu}$ that are contained in facets of ${\Cal P}$, in the full-dimensional case, and is just the segment $[x, \nu]$ in the nonfull-dimensional case, as follows.

If $x, \nu\in {\Cal P}$ and $p\notin {\Cal A}({\Cal P})$, a point $y$ on the $p$-visible path has the form $y(s, t)=(1-s)\ell_{x, \nu}(t) + sp$ where $t\in [0,1]$ and $s$ is largest so that this point is in ${\Cal P}$. But for fixed $t$ we have $y_0=y(0,t)\in {\Cal P}$, and if it were true that $y_s=y(s, t)\in {\Cal P}$ for some $s>0$ then the entire line containing $y_0$ and $y_s$ would lie in ${\Cal A}({\Cal P})$, thus so would $p$, contradicting that $p\notin {\Cal A}({\Cal P})$. Thus in this case the $p$-visible path from $x$ to $\nu$ consists of just the segment $[x, \nu]$.
On the other hand, it is clear that in the full-dimensional case (meaning that $p\in {\Cal A}({\Cal P})$)  distinct segments of a $p$-visible path will lie in separate facets of ${\Cal P}$.  It is easy enough to express constraints for ${\Cal P}_{x, \nu}$, consisting of constraints $C$ that define ${\Cal C}(\{x, \nu, p\}) $ together with the constraints that define ${\Cal P}$. The constraints $C$ are satisfied by all of the  points on the $p$-visible path from $x$ to $\nu$, so at a transition between segments of this path, i.e., facets of ${\Cal P}_{x, \nu}$, the only constraint that can possibly change is one of the constraints defining ${\Cal P}$, so the transition is also between facets of ${\Cal P}$. 

Let polyhedron ${\Cal P} = \{x\in \bbr^m\mid {\hat A}x\leq {\hat b}, {\tilde A}x = {\tilde b}\}$ where $\{x\in \bbr^m\mid {\tilde A}x = {\tilde b}\}$ is an $n$-dimensional affine subspace of $\bbr^m$ and where ${\hat A}x\leq {\hat b}$ has no implicit inequalities. Given a particular solution $\xi_0$ of ${\tilde A}x = {\tilde b}$ we have ${\Cal P}-\xi_0 = \{x\in \bbr^m\mid {\hat A}x\leq { b}, {\tilde A}x = 0\}$ where $b={\hat b}-{\hat A}\xi_0$. We must have $n=m-r$ where $r$ is the rank of ${\tilde A}$ and $n$ is the dimension of the null space $N({\tilde A})$ of ${\tilde A}$. Choose an orthonormal basis $\nu_0, \dots, \nu_{n-1}$ for $N({\tilde A})$ and complete this to an orthonormal basis of $\bbr^m$ by the addition of vectors $\gamma_0, \dots, \gamma_{m-n-1}$. The matrix $\Lambda$ with columns $\nu_0, \dots, \nu_{n-1},\gamma_0, \dots, \gamma_{m-n-1}$ then defines an invertible  linear transformation $\Lambda:\bbr^m\rightarrow \bbr^m$ by $x\rightarrow \Lambda x$. Define the injection $\iota_{n, m}:\bbr^n\rightarrow \bbr^m$ as $\iota_{n, m}(x)=(x_0, \dots,x_{n-1}, 0, \dots, 0)$ and the projection $\pi_{m,n}:\bbr^m\rightarrow\bbr^n$ as $\pi_{m, n}(x) = (x_0, \dots, x_{n-1})$. Denoting translation by $a$ as $\tau_a$, it is clear that the map $I=\tau_{\xi_0}\circ\Lambda\circ\iota_{n, m}:\bbr^n\rightarrow {\Cal A}({\Cal P})$ is invertible with inverse the restriction of $P=\pi_{m,n}\circ\Lambda^*\circ\tau_{-\xi_0}$ to ${\Cal A}({\Cal P})$, and we have $P({\Cal P}) = {\Cal Q} = \{x\in \bbr^n\mid Ax \leq b\}$ where we set  ${\bar A} ={\hat A}\Lambda$ and finally define matrix $A$ as comprised of the first $n$ columns of ${\bar A}$. Moreover given matrix $A$ we can extend this matrix to a matrix ${\bar A}^\prime$ by adding $m-n$ columns in an arbitrary way, and then set ${\hat A}^\prime = {\bar A}^\prime \Lambda^*$. If $x\in N({\tilde A})$ we can write $x=\Lambda y$ for some $y\in \iota_{n,m}(\bbr^n)$. Then ${\hat A}^\prime x = {\bar A}^\prime \Lambda^*\Lambda y = A\pi_{m,n}(y)$ so
for $x\in N({\tilde A})$ we have ${\hat A}x = {\hat A}^\prime x$ regardless of how we extend $A$, and can construct ${\Cal P}$ from ${\Cal Q}$. If ${\hat A}^\prime$ and ${\hat A}$ are both obtained by extending $A$ and $\xi_0$ is a particular solution as above then ${\hat A}^\prime x \leq {\hat b}^\prime$ iff ${\hat A}x\leq {\hat b}$ for all $x$ satisifying ${\tilde A}x = {\tilde b}$ where ${\hat b}^\prime = b+{\hat A}^\prime\xi_0$ and ${\hat b} = b+{\hat A}\xi_0$. Therefore we can always obtain ${\Cal P}$ from ${\Cal Q}$ and the system ${\tilde A}x= {\tilde b}$, i.e., even though the inequality constraints for ${\Cal P}$ may differ, we obtain the same ${\Cal P}$. In any case with a fixed choice of $\xi_0$ the maps $I$ and $P$ are inverses of each other, and both preserve distance and angles, since $\Lambda$ is an orthogonal transformation. It is also true that $Ax\leq b$ is irredundant iff ${\hat A}x\leq {\hat b}$ is and that these maps preserve quantities like orthogonal projection from ${\Cal A}({\Cal P})$ to an affine subspace ${\Cal A}$, and escapes along affine subspaces. This process admits at least one adaptation of the eventual algorithm to the case in which there are implicit equalities.

The following theorem concerns the full-dimensional case $p\in {\Cal A}({\Cal P})$ or equivalently that ${\Cal P}$ has the same dimension as the affine hull of  $\{p\}\union {\Cal P}$. The argument for ${\Cal P} = \{x\in \bbr^m\mid { A}x\leq { b}, {\tilde A}x = {\tilde b}\}$ where $ {A}x\leq { b}$ has no implicit inequalities and ${\Cal A}({\Cal P}) = \{x\in \bbr^m\mid  {\tilde A}x = {\tilde b}\}$ is an $n$-dimensional affine subspace of $\bbr^m$ is the same as for the case that $m=n$ and the system ${\tilde A}x = {\tilde b}$  is empty and ${\Cal A}({\Cal P}) = \bbr^n$.
\begin{theorem}
\label{th:qm1p1} 
Let $p\in {\Cal A}({\Cal P})\setminus {\Cal P}$. A point $e\in {\Cal P}$  is the nearest point of ${\Cal P}$ to $p$ if and only if $e$ is $p$-visible and for all $F$ with $e\in F<P$ there is no escape from $e$ toward $p$ along $F$.
\end{theorem}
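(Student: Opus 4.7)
Assume $e$ is the nearest point. The $p$-visibility is immediate: any $q \in (e,p) \cap {\Cal P}$ would satisfy $\|q-p\| < \|e-p\|$, contradicting that $e$ is nearest. For the absence of escapes, suppose $e' = e + t_0(\pi_F - e)$ is an escape along some $F < \Cal{P}$, with $t_0 \in (0,1]$ and $e' \in {\Cal P}$ (so $\pi_F \neq e$). Since $p - \pi_F$ is orthogonal to ${\Cal A}(F)$ while $e, e' \in {\Cal A}(F)$, Pythagoras yields $\|e-p\|^2 = \|e-\pi_F\|^2 + \|\pi_F - p\|^2$ and the analogous identity for $e'$. From $e' - \pi_F = (1-t_0)(e - \pi_F)$ and $t_0 > 0$, $\|e' - \pi_F\| < \|e - \pi_F\|$, so $\|e' - p\| < \|e - p\|$, contradiction.

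\textbf{Reverse direction} (contrapositive): Assume $e$ is $p$-visible but not the nearest point; I want to produce an escape. Let $\nu \neq e$ be the nearest point and apply Lemma~\ref{le:qm1p1}: the $p$-visible path from $e$ to $\nu$ has strictly decreasing $\delta_{e,\nu}$. By the remark following Lemma~\ref{le:qm1p1}, the first segment $[e, v_1]$ of this path lies in some facet $F_1$ of ${\Cal P}$ in the full-dimensional case, and strict decrease of $\delta$ on $[e, v_1]$ together with Pythagoras in ${\Cal A}(F_1)$ forces $\pi_{F_1} \neq e$.

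To exhibit the escape, I pass to $F_0$, the smallest face of ${\Cal P}$ containing $e$. Since $e \in \mathrm{relint}(F_0)$, every direction in ${\Cal A}(F_0) - e$ is feasible at $e$, so if $\pi_{F_0} \neq e$ the escape along $F_0$ is immediate. Otherwise, letting $I(e) = \{i : a_i \cdot e = b_i\}$ index the active constraints at $e$, $p - e$ is orthogonal to ${\Cal A}(F_0) - e$ and hence lies in $\mathrm{span}\{a_i : i \in I(e)\}$; write $p - e = \sum_{i \in I(e)} \lambda_i a_i$. If all $\lambda_i \geq 0$ this is a KKT certificate that $e$ is nearest, contradicting assumption, so some $\lambda_{j^*} < 0$. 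A short check using $p$-visibility (the direction $p-e$ must be infeasible at $e$) rules out $|I(e)| = 1$, so $I_{F^*} := I(e) \setminus \{j^*\}$ is nonempty and the face $F^*$ of ${\Cal P}$ with these active constraints is a proper face containing $e$. Computing the orthogonal decomposition of $p - e$ onto ${\Cal A}(F^*) - e$, I expect to see $\pi_{F^*} - e$ nonzero, $\langle a_{j^*}, \pi_{F^*} - e \rangle < 0$ (from $\lambda_{j^*} < 0$), and $\langle a_i, \pi_{F^*} - e \rangle = 0$ for $i \in I_{F^*}$, which gives the required escape along $F^*$. The main obstacle is this final feasibility verification, particularly when the rows $\{a_i\}_{i \in I(e)}$ are linearly dependent; there one must first pick a linearly independent subset of active rows before defining $F^*$, and show the decomposition still produces a feasible descent direction.
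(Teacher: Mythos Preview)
Your forward direction is correct and is essentially the paper's (terser) argument.

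For the reverse direction you take a KKT-multiplier route that differs from the paper's geometric one; it is correct in the non-degenerate case but, as you yourself flag, incomplete when the active normals $\{a_i : i \in I(e)\}$ are linearly dependent. The gap is real and your suggested patch (pass to a linearly independent subset $J \subset I(e)$) does not close it. With $p - e = \sum_{j\in J}\mu_j a_j$ and $\mu_{j^*}<0$, the projection $d$ of $p-e$ onto $V^*=\{v:a_i\cdot v=0,\ i\in J\setminus\{j^*\}\}$ does satisfy $a_{j^*}\cdot d<0$ (from $\|d\|^2=\mu_{j^*}(a_{j^*}\cdot d)>0$), but for $k\in I(e)\setminus J$ one has $a_k=\sum_{j\in J}\alpha_{kj}a_j$ and hence $a_k\cdot d=\alpha_{k,j^*}(a_{j^*}\cdot d)$, whose sign is that of $-\alpha_{k,j^*}$; nothing forces $\alpha_{k,j^*}\ge 0$, so $d$ may violate an active constraint you discarded and $e+td$ may leave ${\Cal P}$ immediately. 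Even when $d$ happens to be feasible, $d$ is the projection onto the affine subspace cut out by $J\setminus\{j^*\}$, which can strictly contain ${\Cal A}(F^*)$ for the actual face $F^*$ (other indices in $I(e)$ may lie in ${\Cal S}(F^*)$), so $d$ need not coincide with $\pi_{F^*}-e$ as Definition~\ref{de:qm1p1} requires.

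The paper avoids multipliers entirely. It first proves (Claim~\ref{cl:clmp1}, via Lemma~\ref{le:qm1p1}) that if $e$ is nearest to $p$ within every face containing it, then $e$ is nearest in ${\Cal P}$. Hence some face $F\ni e$ contains a closer point; take $F$ \emph{minimal} with this property. If $e\in F^\circ$ the escape along $F$ is immediate; otherwise the $\pi_F$-visible path inside $F$ from $e$ to $\nu_F$ begins in some facet of $F$ containing $e$, which therefore also contains a closer point---contradicting minimality of $F$. Your opening observation about the facet $F_1$ is exactly the first step of this minimal-face descent; had you iterated it inside $F_1$ instead of switching to KKT, you would have arrived at the paper's argument.
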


\begin{proof}

If $e$ is the nearest point to $p$ then since an escape would be closer to $p$, there can be no escapes, and certainly $e$ is  $p$-visible in this case.
So suppose  $e$ is $p$-visible and there are no escapes. Suppose also that $e$ is not the nearest point of ${\Cal P}$ to $p$. 

\begin{claim}
\label{cl:clmp1}
If $e$ is $p$-visible and is the nearest point of $F$ to $p$ for all ${\Cal P} > F\ni e$ then $e$ is the nearest point of ${\Cal P}$ to $p$.
\end{claim}
\begin{proof}
We may assume that ${\Cal P}$ is not a single point, in which case the claim is vacuously true. If ${\Cal P}$ is one-dimensional then $e$ is a vertex and is also the nearest point of ${\Cal P}$ to $p$ by our assumption that ${\Cal P}$ is full-dimensional and the fact that $e$ is $p$-visible, so we may assume that $\dim({\Cal P})\geq 2$. Let $\nu$ be the nearest point to $p$, assume $e\neq \nu$ and consider the $p$-visible path from $e$ to $\nu$. Since $e$ is $p$-visible, $e$ lies in some facet of ${\Cal P}$. The  $p$-visible path traverses facets of  ${\Cal P}$, 
so for some facet $F$ of ${\Cal P}$, a portion of the $p$-visible path (namely the first segment of the  $\pi_F$-visible path from $e$ to $\nu_F$) lies in $F\ni e$, and then for some $f\in F$ distinct from $e$ the segment $[f, e]$ lies in $F$ and in the $p$-visible path from $e$ to $\nu$. But for $F$ as a polyhedron the $p$-visible path in $F$ from $f$ to $e$ coincides with this segment (since $p$, $e$, $\nu$ and $f$ as well as the rest of the $p$-visible path in ${\Cal P}$ are coplanar). But then the distance to $p$ along this segment is both strictly increasing and decreasing, which contradiction shows that we must have $e = \nu$.
\end{proof}

By the claim, if we suppose that $e$ is not the nearest point to $p$, there is a face $F<{\Cal P}$ for which $e\in F$ is not the nearest point of $F$ to $p$. We also have $\dim(F)\geq 1$, then. Note that $e$ is also not the nearest point of $F$ to $\pi_F$, so $e\neq \pi_F$. If $e\in F^\circ$ and $B_\epsilon\subset F^\circ$ is a ball of radius $\epsilon$ centered at $e$, then for some $e^\prime\neq e$ the line segment $[e,  e^\prime]\subset B_\epsilon\intersect [e,  \pi_F]$ which is all that is necessary to show existence of an escape along $F$.

Now if every face that contains $e$ contains a point closer to $p$ than $e$ let $K$ be minimal with respect to containing $e$. Then $e\in K^\circ$ and $K$  is not a vertex (nor is $e$) and we have an escape as just observed.

At this point we may assume that for some $F\ni e$, $e$ is not the nearest point of $F$ to $p$, but that this is not true of every face that contains $e$. Therefore we may assume that $F$ is minimal with respect to containing both $e$ and a point nearer to $p$ than $e$, and furthermore that $e\notin F^\circ$ or we are done. We may also assume that $\pi_F\notin F$ since otherwise we immediately have an escape along $F$, because by assumption $e\neq \pi_F$. We may further assume that $\dim(F)\geq 2$ since in the one-dimensional case if $e$ is $p$-visible, $e$ is the nearest point of $F$ to $p$.  In any case $e\in K$ for some facet $K<F$ and therefore $e$ is the nearest point of $K$ to $p$, and in fact $e$ is the nearest point to $p$ (and to $\pi_F$) for any facet of $F$ that contains $e$.  The segment $[e, \pi_F]$ has no point of $F$ except $e$, or we again have an escape along $F$. On the other hand the segment $[e, \nu_F]$ is contained entirely within $F$ where $\nu_F$ is the nearest point of $F$ to $p$. 
The triangle (polytope) with vertices $e, \pi_F, \nu_F$ either intersects $F$ in a two-dimensional polytope, for which as usual the boundary must consist of finitely many segments, or it intersects $F$ in a single line segment. Either way, within ${\Cal A}(F)$, the $\pi_F$-visible path from $e$ to $\nu_F$ consists of finitely many segments that pass through facets of $F$ which are themselves at least one-dimensional. In particular the first segment originating at $e$ is contained in a facet of $F$ that contains $e$,
which is then a facet of $F$ containing $e$ for which $e$ is not the nearest point to $\pi_F$ (or $p$), which is a contradiction since $F$ was supposed to be minimal. \end{proof}

In the context of Theorem \ref{th:qm1p1} a point $e\in{\Cal P}$ that is $p$-visible can be viewed simply as not having an escape along ${\Cal P}$ itself, that is along the affine hull ${\Cal A}({\Cal P})$, and the projection of $p$ in this case is simply $p$ since $p\in {\Cal A}({\Cal P})$. 
We will use this point of view subsequently in the following form, which is the nonfull-dimensional case.
\begin{theorem}
\label{th:qm1p11} 
Let ${\Cal P}$ be an $n$-dimensional polyhedron in $\bbr^m$ and $p\in\bbr^m$. 
Then a point $e\in{\Cal P}$ is the nearest point of ${\Cal P}$ to $p$ if and only if $e$ has no escape toward $p$  along ${\Cal P}$ or a face of ${\Cal P}$ .
\end{theorem}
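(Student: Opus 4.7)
The plan is to reduce Theorem \ref{th:qm1p11} to Theorem \ref{th:qm1p1} via two ingredients already in the text: the Pythagorean decomposition of $p$ along ${\Cal A}({\Cal P})$, and the orthogonal coordinate change $I$, $P$ constructed just above. Let $p' \in {\Cal A}({\Cal P})$ be the orthogonal projection of $p$ onto ${\Cal A}({\Cal P})$, so that $\|x-p\|^2 = \|x-p'\|^2 + \|p-p'\|^2$ for every $x \in {\Cal A}({\Cal P})$. Two consequences are immediate. First, the nearest point of ${\Cal P}$ to $p$ is exactly the nearest point of ${\Cal P}$ to $p'$. Second, for every face $F \leq {\Cal P}$ we have ${\Cal A}(F) \subseteq {\Cal A}({\Cal P})$, so the orthogonal projection $\pi_F$ of $p$ onto ${\Cal A}(F)$ equals the orthogonal projection of $p'$ onto ${\Cal A}(F)$, and hence \emph{escape along $F$ toward $p$} literally coincides with \emph{escape along $F$ toward $p'$}. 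This reduces matters to analyzing the pair $({\Cal P}, p')$ with $p' \in {\Cal A}({\Cal P})$.

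If $p' \in {\Cal P}$, everything collapses to a short direct check: the unique nearest point is $e = p'$, and for this $e$ every face $F \ni e$ satisfies $\pi_F = p'$ (the projection of a point already in ${\Cal A}(F)$ onto ${\Cal A}(F)$), so $\pi_F - e = 0$ and no escape exists; while for any $e \neq p'$ the segment $[e, p'] \subseteq {\Cal P}$ furnishes an escape along $F = {\Cal P}$, since $\pi_{\Cal P} = p'$. If instead $p' \notin {\Cal P}$, I would pass to ${\Cal Q} = P({\Cal P}) \subseteq \bbr^n$ and $q = P(p')$. Then ${\Cal Q}$ is full-dimensional in $\bbr^n$, $q \in {\Cal A}({\Cal Q}) \setminus {\Cal Q}$, and Theorem \ref{th:qm1p1} applies. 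Since $I$ is an orthogonal affine isomorphism, it preserves distances, the face lattice, orthogonal projections onto affine subspaces within ${\Cal A}({\Cal P})$, and therefore escapes, as the paper has already asserted. Pulling back the characterization from $({\Cal Q}, q)$ through $I$ gives the characterization of the nearest point of ${\Cal P}$ to $p'$, and hence to $p$.

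The one place that wants real care is aligning the two statements across this translation. Theorem \ref{th:qm1p1} phrases the condition as ``$p$-visible and no escape along proper faces,'' whereas Theorem \ref{th:qm1p11} phrases it as ``no escape along ${\Cal P}$ or a face of ${\Cal P}$.'' The reconciliation is the observation made in the remark immediately before the theorem: in the full-dimensional setting, $p$-visibility of $e$ is equivalent to the nonexistence of an escape from $e$ along ${\Cal P}$ itself, since $\pi_{\Cal P} = p$ there. Once this equivalence is installed in the image $({\Cal Q}, q)$ and pulled back via $I$, the old $p$-visibility hypothesis is absorbed into the $F = {\Cal P}$ instance of the escape condition, which is exactly the form Theorem \ref{th:qm1p11} asserts. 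The main obstacle is thus purely notational: verifying that all four items (distance, faces, projections onto affine hulls of faces, and the escape construction) really do commute with $I$ and $P$; no new geometric content is needed beyond Theorem \ref{th:qm1p1}.
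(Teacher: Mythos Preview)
Your proposal is correct and follows essentially the same route as the paper: project $p$ onto ${\Cal A}({\Cal P})$ to obtain $p'$ (the paper's $p_0$), dispose of the case $p'\in{\Cal P}$ directly, and otherwise invoke Theorem~\ref{th:qm1p1} together with the remark that $p$-visibility is the same as ``no escape along ${\Cal P}$.'' The one superfluous step is your passage through the coordinate change $I,P$ to ${\Cal Q}\subset\bbr^n$: as the paragraph immediately preceding Theorem~\ref{th:qm1p1} notes, that theorem is already stated for ${\Cal P}\subset\bbr^m$ with $p\in{\Cal A}({\Cal P})$, so the paper applies it directly to $({\Cal P},p_0)$ without changing coordinates.
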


\begin{proof}
Suppose $e\in {\Cal P}$ has no escape toward $p$ along ${\Cal P}$ or a face of ${\Cal P}$. Let $p_0$ be the projection of $p$ onto the affine hull ${\Cal A}({\Cal P})$. If $p_0\in {\Cal P}$ then we must have $e=p_0$ and then $e$ is clearly the nearest point of ${\Cal P}$ to $p$. If $p_0\notin {\Cal P}$ then there can be no points of ${\Cal P}$ between $e$ and $p_0$ on the line between them, so $e$ is $p_0$-visible and we are in the situation of Theorem  \ref{th:qm1p1}, restricting our viewpoint to ${\Cal A}({\Cal P})$ in which ${\Cal P}$ is full-dimensional. Then $e$ is the nearest point of ${\Cal P}$ to $p_0$ and thus to $p$. The converse is clear.
\end{proof}

If $K\leq F$ are faces of ${\Cal P}$ we will write ${\Cal D}(K, F)$ for the codimension of $K$ in $F$, that is, if $F$ has dimension $p$ and $K$ has dimension $q$ then ${\Cal D}(K, F) = p-q$. If $K\leq F$ we will write ${\Cal F}_d(K, F)$ for the set of faces $G$ with $K\leq G\leq F$ and ${\Cal D}(K, G) = d$, and just ${\Cal F}_d(K)$ for the set of all faces $G$ with ${\Cal D}(K, G) = d$.   Note also that $e\in K^\circ$ in the relative topology iff $K$ is the smallest face containing $e$.

\begin{theorem} 
\label{th:qm1p12}
If $F, K$ are faces of ${\Cal P}$, $K<F$, $e\in K^\circ$, and there is an escape from $e$ toward $p\notin {\Cal P}$ along $F$, then there is an escape from $e$ along either K or a face $K^\prime\in {\Cal F}_1(K)$.
\end{theorem}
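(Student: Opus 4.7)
The plan is to case-split on whether the projection $\pi_K$ of $p$ onto ${\Cal A}(K)$ coincides with $e$. For any face $G$ of ${\Cal P}$ containing $e$, let $V_G := {\Cal A}(G) - e$ denote the associated linear subspace. Set $d := \pi_F - e \in V_F$: the escape hypothesis forces $d \neq 0$ and $e + td \in F \subseteq {\Cal P}$ for small $t > 0$, so in particular $d$ lies in the tangent cone $T_F(e)$ of $F$ at $e$.

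In Case 1, $\pi_K \neq e$, and I would exhibit an escape along $K$ directly. The direction $\pi_K - e$ lies in $V_K$, and since $e \in K^\circ$ any sufficiently short segment from $e$ in that direction stays in $K^\circ \subseteq {\Cal P}$. Hence the largest $t_0 \in (0,1]$ with $e + t_0(\pi_K - e) \in {\Cal P}$ is strictly positive, and $\pi_K \neq e$ ensures $e + t_0(\pi_K - e) \neq e$.

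In Case 2, $\pi_K = e$, which is equivalent to $p - e \perp V_K$; the same property then passes to $d = P_{V_F}(p-e)$. Since $e \in K^\circ$, the lineality space of $T_F(e)$ is exactly $V_K$, so $T_F(e) = V_K \oplus C$ where $C := T_F(e) \cap V_K^\perp$ is a pointed polyhedral cone inside $V_F$. Reading off $T_F(e)$ from the inequalities of $F$ active at $e$ identifies the extreme rays of $C$ with the covers ${\Cal F}_1(K, F)$: each $K' \in {\Cal F}_1(K, F)$ supplies a unit generator $u_{K'} \in V_{K'} \cap V_K^\perp$ that points into $K'$ from $e$. Expressing $d \in C$ as $d = \sum_{K'} \alpha_{K'} u_{K'}$ with $\alpha_{K'} \geq 0$ and computing $\|d\|^2 = \sum \alpha_{K'} \langle d, u_{K'}\rangle > 0$ forces some $\langle d, u_{K'_0}\rangle > 0$.

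To close, the orthogonal decomposition $V_{K'_0} = V_K \oplus \langle u_{K'_0}\rangle$ together with $\pi_K = e$ yields $\pi_{K'_0} - e = \langle d, u_{K'_0}\rangle u_{K'_0}$, a strictly positive multiple of $u_{K'_0}$. Because $u_{K'_0}$ points into $K'_0$ from $e$, a short positive segment in this direction stays in $K'_0 \subseteq {\Cal P}$, so there is a largest $t_0 \in (0,1]$ with $e + t_0(\pi_{K'_0} - e) \in {\Cal P}$, delivering the desired escape along $K'_0 \in {\Cal F}_1(K)$. The main obstacle will be justifying the identification of the extreme rays of $C$ with the covers ${\Cal F}_1(K, F)$; this is the combinatorial step at which the face-lattice structure really enters the argument, but it is a standard consequence of the active-constraint description of the tangent cone at $e$.
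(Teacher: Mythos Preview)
Your argument is correct and takes a genuinely different route from the paper's. The paper proceeds by contradiction: assuming no escape along $K$ or any $K'\in{\Cal F}_1(K)$, it picks a minimal $F$ of codimension $\geq 2$ admitting an escape, invokes Theorem~\ref{th:qm1p11} to conclude that $e$ is the nearest point of every facet $G<F$ through $e$, and then uses a half-ball/sequence argument to force all such facets to lie in the single hyperplane $H_e$ orthogonal to $\pi_F-e$, contradicting the fact that $K$ is cut out by at least two distinct facets of $F$. Your proof is instead direct and linear-algebraic: after disposing of the case $\pi_K\neq e$, you work in the tangent cone $T_F(e)=V_K\oplus C$, decompose $d=\pi_F-e\in C$ along the extreme rays of $C$, and read off a cover $K'_0$ with $\langle d,u_{K'_0}\rangle>0$, which immediately gives $\pi_{K'_0}\neq e$ and an escape along $K'_0$. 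The payoff of your approach is that it is self-contained---it does not rely on Theorem~\ref{th:qm1p11} or the $p$-visible path machinery---and it is constructive, pointing to an explicit $K'_0$. The paper's approach, on the other hand, keeps everything within the geometric language already developed and avoids any appeal to the face-lattice/tangent-cone correspondence, which you rightly flag as the one nontrivial structural input in your version (though it is indeed standard).
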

\begin{proof}
Suppose $e\in K^\circ$ and there is an escape along $F$ for some $F>K$. If ${\Cal D}(K, F) \leq 1$ we are done,  so assume ${\Cal D}(K, F) \geq 2$, and that there is no escape for $K$ or $K^\prime\in {\Cal F}_1(K)$.  We may assume $F$ is minimal of $K$-codimension at least two with the property of having an escape from $e$ along $F$. Let $K< G<F$ and ${\Cal D}(G, F) = 1$. Since $F$ is minimal, there are no escapes from $e$ along $G$ or a subface of $G$ containing $e$, 
so $e$ is the nearest point of $G$ to $p$ by Theorem \ref{th:qm1p11}. 

Since there is an escape along $F$, $e\neq \pi_F$ and there is a point $e^\prime \neq e$ on the line $[e, \pi_F]$ for which the segment $[e, e^\prime]\subset F$. Let $H_e = H\intersect {\Cal A}(F)$ where $H$ is the hyperplane orthogonal to $[e, \pi_F]$ through $e$, that is, $H_e = \{x\in {\Cal A}(F)\mid (x-e)\cdot (\pi_F-e)=0\}$. Let $H^+_e$ be the open half space in ${\Cal A}(F)$ determined by $H_e$ that contains $\pi_F$, so $H_e^+ = \{x\in {\Cal A}(F)\mid (x-e)\cdot (\pi_F-e)>0\}$ and let $H_e^-$ be the corresponding open half space defined for negative inner products. Let ${\tilde B}$ be the open ball centered at $\pi_F$ of radius $\|\pi_F -e\|$, and note that for any $x\in H_e^+$ the segment $[e, x]$ must pass through ${\tilde B}$. 
We claim that for some $\epsilon > 0$ there is an open half-ball (the intersection of  $H^+_e$ with an ordinary ball)  $B_\epsilon$ of radius $\epsilon$ centered at $e$ which does not intersect any facet of $F$. 
Suppose otherwise, so that for any sequence $\{\epsilon_i\}$  tending to zero there is a point  $x_i\in B_{\epsilon_i}\intersect G^\prime$ where $ B_{\epsilon_i}\subset H^+_e $ is an open half-ball centered at $e$ and $G^\prime$ is some facet of $F$, noting that each $ B_{\epsilon_i}$ contains points of $F$. Then some subsequence of $\{x_i\}$ must be contained in a particular facet $G_0 < F$ and converge to $e$. But then $e\in G_0$ and $e$ must be the nearest point of $G_0$ to $\pi_F$, which is a contradiction, since $G_0$ has  points nearer to $\pi_F$ than $e$ is.
So let $B_\epsilon$ be the required half-ball for some $\epsilon$. The set $B_\epsilon$ is connected, contains points of $F$ and does not intersect any facet of $F$, so in fact $B_\epsilon\subset F^\circ$.
We now note that for any facet $G$ of $F$ that contains $e$ we must have ${\Cal A}(G) \subset H_e$. 
If $x\in {\Cal A}(G)\intersect H^+_e$ then for some $g$ on the segment $[e, x]$ we have
$g\in B_\epsilon\intersect {\Cal A}(G)\subset F\intersect {\Cal A}(G) = G$, but then the segment $[e, g]$, hence $G$, contains points nearer to $\pi_F$ than $e$ is, again a contradiction. Similarly there can be no points $y\in {\Cal A}(G)\intersect H^-_e$ since that would force the existence of an $x\in {\Cal A}(G)\intersect H^+_e$. The set $H_e$ has dimension at most $\dim(F) -1$ and therefore exactly $\dim(F)-1 = \dim(G)$, thus ${\Cal A}(G) = H_e$. Since this argument applies for any facet $G$ of $F$ containing $e$ and since $G = F\intersect H_e$, there is only one facet $G<F$ that contains $e$. 
But this is also a contradiction and there can be no such minimal $F$, because  
${\Cal D}(K, F)\geq 2$, and therefore $K$ is an intersection of at least two distinct facets of $F$ (that contain $e$). 
 \end{proof}

By Theorem \ref{th:qm1p12}, the search for an escape (or the pursuit of the lack of one) can be confined to $K$ for which $e\in K^\circ$ or a codimension-$1$ face $F>K$, so for construction of an algorithm we require an enumeration of the codimension-$1$ superfaces of a given $K$. This is easy if the face lattice of ${\Cal P}$ is known in advance, but otherwise may require some work. Such an enumeration might also be taken as part of a scheme for the construction of the face lattice. There will be a trade-off between extra processing to do this, versus a scheme that may admit more containing faces of $K$ but which does include those of codimension-$1$. It is this choice that gives rise to a class of algorithms.

For the construction of an algorithm, we will assume henceforth unless stated otherwise that ${\Cal A}({\Cal P}) = \bbr^n$. From \cite{as-tlip}, if $K$ is a face of ${\Cal P}$ we have ${\Cal A}(K) = \{x\in\bbr^n|A^\prime x = b^\prime\}$ where the system $(A^\prime \mid b^\prime)$ is a subsystem of $(A \mid b)$, and where we use  the latter notation to represent mere sets of rows from the augmented matrix $[A \mid b]$ irrespective of equality or inequality or to represent the augmented matrix itself, depending on context, and where $(A\mid b)$ is the set of rows for the inequalities that define ${\Cal P}$. We will also for convenience confuse a singleton $(a\mid b)$ with a single row of the containing system. 

By the irredundancy assumption each facet of ${\Cal P}$ has the form $\{x\in {\Cal P}\mid a_ix = b_i\} = {\Cal P}\intersect\{x\in\bbr^n \mid a_ix = b_i\}$ for some $(a_i\mid b_i)\in (A\mid b)$.  Each face $K<{\Cal P}$ is an intersection of facets and has the form $K = {\Cal P}\intersect \{x\in \bbr^n\mid A^\prime x = b^\prime\}$ where as noted above  ${\Cal A}(K) =\{x\in \bbr^n\mid A^\prime x = b^\prime\}$ for some set $(A^\prime \mid b^\prime)\subset (A\mid b)$.

For a face $K<{\Cal P}$, we will say that a set $(A^\prime \mid b^\prime)$ {\it represents} $K$ (or ${\Cal A}(K)$) if ${\Cal A}(K) = \{x\in \bbr^n\mid A^\prime x = b^\prime\}$ and will write ${\Cal S}(K)$ for the maximal set $(A^\prime \mid b^\prime)$ that represents $K$. It is easy to show (by an argument similar to that of the claim below) that ${\Cal A}(K)=\{x\in\bbr^n\mid A^\prime x = b^\prime \}$ where $(A^\prime\mid b^\prime) = {\Cal S}(K)$ is the set of all $(a_i\mid b_i)$ such that $a_ix = b_i$ for all $x\in K$.

If $(A^\prime\mid b^\prime)$ represents ${\Cal A}(K)$ then  $\dim(K)=\dim({\Cal A}(K))$ the latter of which is determined by the rank of $A^\prime$. If $\rank(A^\prime)=r$ then the dimension of $K$ (${\Cal A}(K)$) is $n-r$ since that is the dimension of the null space $N(A^\prime)$ which is a translate of ${\Cal A}(K)$.
If $F>K$ and if ${\Cal A}(F) = \{x\in \bbr^n\mid A^\twoprime x = b^\twoprime\}$ then every point of $K$ satisfies all of these equalities so $(A^\twoprime \mid b^\twoprime)\subset {\Cal S}(K)$. 
A point $e$ on the boundary of ${\Cal P}$ will belong to one or more facets of ${\Cal P}$, whose intersection  we will denote as $K_{e} = \{x\in{\Cal P}\mid {A^\prime}x = { b^\prime}\} = {\Cal P}\intersect \{x\in \bbr^n\mid  {A^\prime}x = { b^\prime}\}$ where $( {A^\prime}\mid { b^\prime}) = ( {A^\prime}\mid { b^\prime})_e$ is the set of inequalities satisfied with equality by $e$. For convenience we note the following.

\begin{claim}
We have ${\Cal S}(K_e) = ( { A^\prime}\mid {b^\prime})_e$, ${\Cal A}(K_{e}) = \{x\in \bbr^n\mid  {A^\prime}x = { b^\prime}\}$ and $e\in K_{e}^\circ$.
\end{claim}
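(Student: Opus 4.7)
The plan is to set $V=\{x\in\bbr^n\mid A^\prime x=b^\prime\}$ with $(A^\prime\mid b^\prime)=(A^\prime\mid b^\prime)_e$, and prove the three assertions in the order ${\Cal A}(K_e)=V$, then $e\in K_e^\circ$, then ${\Cal S}(K_e)=(A^\prime\mid b^\prime)_e$, because each subsequent part follows quickly from the previous.

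First I would establish ${\Cal A}(K_e)=V$. The inclusion ${\Cal A}(K_e)\subseteq V$ is immediate because $K_e\subseteq V$ by definition of $K_e$ and $V$ is an affine subspace. For the reverse, the key observation is that for every row $(a_j\mid b_j)\in(A\mid b)\setminus(A^\prime\mid b^\prime)_e$ we have $a_j e<b_j$ strictly; since $(A\mid b)$ is finite, the minimum slack $\min_j(b_j-a_je)$ taken over such rows is positive, so by continuity there is a $\delta>0$ such that for every $x\in V$ with $\|x-e\|<\delta$ the inequality $a_jx<b_j$ still holds for each non-active $j$, while $A^\prime x=b^\prime$ is automatic for $x\in V$. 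Thus the open $\delta$-ball of $V$ centered at $e$ lies in ${\Cal P}$ and in $V$, hence in $K_e$. Since any ball in an affine subspace spans that subspace, we obtain $V\subseteq{\Cal A}(K_e)$.

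The statement $e\in K_e^\circ$ now drops out: the relative interior of $K_e$ consists of the points admitting a relatively open neighborhood in ${\Cal A}(K_e)=V$ contained in $K_e$, and the $\delta$-ball just constructed is such a neighborhood around $e$. For the final assertion ${\Cal S}(K_e)=(A^\prime\mid b^\prime)_e$, recall that ${\Cal S}(K_e)$ is by definition the maximal subset of $(A\mid b)$ representing ${\Cal A}(K_e)$. The first step shows $(A^\prime\mid b^\prime)_e$ itself represents ${\Cal A}(K_e)$, so $(A^\prime\mid b^\prime)_e\subseteq{\Cal S}(K_e)$. Conversely, every row $(a_i\mid b_i)\in{\Cal S}(K_e)$ must hold with equality on ${\Cal A}(K_e)=V$, in particular at $e\in V$, which gives $a_ie=b_i$ and hence $(a_i\mid b_i)\in(A^\prime\mid b^\prime)_e$.

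The only nontrivial step is the ball argument in the first paragraph: choosing a single $\delta$ that works simultaneously for all finitely many non-active rows is the main obstacle, and it is handled by taking the minimum of the finitely many positive slacks, so this is the sole place where the finiteness of the defining system is used in an essential way.
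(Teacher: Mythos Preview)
Your proof is correct and takes a genuinely different route from the paper's. Where you argue directly via positive slack at the inactive rows and an open $\delta$-ball in $V$ centered at $e$ to obtain $V\subseteq{\Cal A}(K_e)$, the paper instead uses a dimension count: it invokes the earlier fact that ${\Cal A}(K_e)$ is representable by some subsystem $(A^{\prime\prime}\mid b^{\prime\prime})\subset(A\mid b)$, observes that since $e$ satisfies each equation of $(A^{\prime\prime}\mid b^{\prime\prime})$ we must have $(A^{\prime\prime}\mid b^{\prime\prime})\subset(A^\prime\mid b^\prime)_e$, whence $\rank(A^{\prime\prime})\le\rank(A^\prime)$ and $\dim({\Cal A}(K_e))\ge\dim(V)$, which together with the easy inclusion ${\Cal A}(K_e)\subseteq V$ forces equality. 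Your ball argument is more self-contained---it does not appeal to the external representability fact---and it yields $e\in K_e^\circ$ as an immediate by-product of the same neighborhood; the paper's dimension count is terser once that external fact is granted, but the relative-interior assertion then needs its own separate remark.
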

\begin{proof}
Let $L=\{x\in \bbr^n\mid  {A^\prime}x = { b^\prime}\}$. We clearly have ${\Cal A}(K_e)\subset L$ from which  $\dim({\Cal A}(K_e))\leq \dim(L)$. On the other hand if ${\Cal A}(K_e)=\{x\in \bbr^n\mid  {A^\twoprime}x = { b^\twoprime}\}$ then $({A^\twoprime}\mid { b^\twoprime})\subset ({A^\prime}\mid { b^\prime})$ since $e$ satisfies the relevant equations. But $\dim({\Cal A}(K_e))=n-r_0$ where $r_0=\rank(A^\twoprime)$  and we must have  $r_0\leq r$ where $r= \rank(A^\prime)$, so $\dim({\Cal A}(K_e))\geq \dim(L)$ and so $L = {\Cal A}(K_e)$. That $e\in K_{e}^\circ$ follows because otherwise we would need to add another equation that $e$ satisfies, and they are already all acounted for.
\end{proof}

\begin{definition}
\label{de:qm1p11}
We will say that ${\Cal E}$ is a {\it codimension-$1$ enumeration (C1E) scheme} for ${\Cal P}$ if for each $K<{\Cal P}$ there is a sequence $\{{\Cal E}_{K, i}\}$ (the {\it enumerator}) of length $|{\Cal E}_K|$ such that for each $i=0, \dots, |{\Cal E}_K|-1$, ${\Cal E}_{K, i}=({\hat A}_i\mid {\hat b}_i)\subset{\Cal S}(K)$, and for each $F\in {\Cal F}_1(K)$ some ${\Cal E}_{K, i}$ represents ${\Cal A}(F)$. 
\end{definition}

 For a face $K_e$ we will denote the enumerations as
$\{{\Cal E}_{e, i}\}$ which will have length $|{\Cal E}_e|$ and each ${\Cal E}_{e, i}=({\hat A}_i\mid {\hat b}_i)\subset{\Cal S}(K_e)$ and require that for each $F\in {\Cal F}_1(K_e)$ some ${\Cal E}_{e, i}$ represents ${\Cal A}(F)$. One possibility for a C1E would be to list every subset of ${\Cal S}(K)$ for each $K$. An {\it efficient} enumerator would  produce a representation for each $F\in {\Cal F}_1(K)$ and nothing more, but this would require more explicit information about ${\Cal P}$, namely the face lattice of ${\Cal P}$. For the computations in this paper we use enumerators that compromise between the former choice and the efficient enumerator.

 If $F\in {\Cal F}_1(K)$ then ${\Cal A}(F)$ is represented by $(A^\twoprime\mid b^\twoprime)$ for some 
$(A^\twoprime\mid b^\twoprime)\subset {\Cal S}(K)$ and it must be true that the rank of $A^\twoprime$ is $r-1$ where $r$ is the rank of $A^\prime$ in $(A^\prime\mid b^\prime) = {\Cal S}(K)$, so we may assume that a representation $ (A^\twoprime\mid b^\twoprime)$ of ${\Cal A}(F)$ has  exactly $r-1$ rows by removing  rows until we have a linearly independent set. 
In other words, the sets $ (A^\twoprime\mid b^\twoprime)$ that represent the affine hulls of  $F\in {\Cal F}_1(K)$ will be found among subsets of ${\Cal S}(K)$ that have $r-1$ elements. This allows the construction of a codimension-$1$ enumerator which we will call the {\it simple enumerator}, by merely picking subsets of size $r-1$ from $(A^\prime\mid b^\prime) = {\Cal S}(K)$ for any $K$, where $r$ is the rank of $A^\prime$. A refinement of this is to select only sets of size $r-1$ that have rank $r-1$. If $r=1$ then ${\Cal F}_1(K)$ is empty and there is nothing to do. 

There are clearly many possibilities for enumerators. For instance, if ${\Cal P}$ is a cone with a single vertex $v$, which happens to be the intersection of a large number $k$ of facets of ${\Cal P}$, then for $e=v$, if the rank $r$ of  $( {A^\prime}\mid { b^\prime})_e$ is relatively small we may end up having $\bigl ( {k\atop r-1}\bigr)$ choices of subsets in the simple enumerator, which may be a somewhat large number. 
A modified enumerator such that if $K=v$ then $\{{\Cal E}_{K, i}\}$ just enumerates the the one-dimensional faces that contain $v$ and which for every other face of ${\Cal P}$ coincides with the simple enumerator, can be used in this case. We note that in all of our examples below, however, we use the unmodified simple enumerator.

In order to present the algorithm, we define a procedure $\hbox{\it{ESC}\hskip 1pt}$ below whose purpose is to supply a potential escape along a given affine subspace. The construction of $\pi_p$ in Definition \ref{de:qm1p2} is the projection of $p$ onto the affine subspace with definition $\{x\in \bbr^n\mid {\hat A}x = {\hat b}\}$ and assumes that ${\hat A}e_0 = {\hat b}$. This projection can be achieved in any convenient way, one possible of which is to use the methods outlined in \cite{ms-lufam}. Once the projection is found, the scheme will follow the direction $\pi_p - e_0$ from $e_0$ to either an exit from ${\Cal P}$ or to $\pi_p$ if $\pi_p$ is reached without exiting ${\Cal P}$. Then the result is either an escape or not an escape, depending upon whether or not movement from $e_0$ within ${\Cal P}$ in the given direction was actually possible. 

As to the projections used in our example computations, we require calculation of the quantities ${A}^+{A}x$ where $x$ is a given vector and $A_{m\times n}$ is an $m \times n$ matrix. If $A$ has rank $r$, then by the results of  \cite{ms-lufam} we can write $PA = LU$ where $P$ is a permutation matrix and $L$ and $U$ represent a rank decomposition of $PA$, with $L_{m\times r}$ and $U_{r\times n}$ both of rank $r$ where $L$ is truncated lower triangular and $U$ is in echelon form with leading nonzero entries equal to  $1$ (these are implicit, so not stored). In this case it is easily shown that $${A}^+{A} = U^*(UU^*)^{-1}U.$$
The factorization is by means of a compact scheme, so that $L$ and $U$ are stored in the same locations as occupied by the original matrix $A$. Moreover we can compute and store the lower triangular part of $UU^*$ in the upper left (lower triangular) $r \times r$  part of $L$ in $PA$ without disturbing the contents of $U$, since for this purpose we no longer need the contents of $L$. Given $x\in \bbr^n$ we compute $y = Ux$ and then $z = (UU^*)^{-1}Ux$ by solving the equation $UU^* z = y$ using well known methods that only require the lower triangular part of the Hermitian positive definite $r \times r$ matrix $UU^*$. Then we have ${A}^+{A}x = U^*z$ as the required projection. 
\begin{definition}
\label{de:qm1p2}
Let $({\hat A}\mid {\hat b}) \subset (A\mid b)$ where as usual $(A\mid b)$ is that defining ${\Cal P}$, and let $e_0\in {\Cal P}$ satisfy ${\hat A}e_0 = {\hat b}$. Define the procedure $\hbox{\it{ESC}\hskip 1pt}$ by
\end{definition}
{
\setlength{\jot}{3pt}
\begin{align}
\hbox{\it{ESC}\hskip 1pt}&({\hat A}, e_0)\qu{.5}\{\notag\\
&\hbox{set}\qu{1.4} \pi_p \leftarrow  p - {\hat A}^+{\hat A}(p-e_0),\notag\\
&\qu{2.6}t_0 \leftarrow \max \{t\in [0,1]\mid e_0 + t(\pi_p -e_0) \in {\Cal P}\}, \notag\\
&\qu{2.6}e^\prime_0 \leftarrow  e_0 + t_0(\pi_p -e_0). \notag\\
&\hbox{return}\qu{1}e^\prime_0\notag\\
\}\qu{1.5}&\notag
\end{align}
}
\vskip 15pt
\noindent 

For the following algorithm, the initial construction determines the set of inequalities satisfied with equality by $e_0$, that is ${\Cal S}(K_{e_0}) = ({\hat A}\mid {\hat b})$, where $e_0\in K_{e_0}^\circ$, where the point $e_0$ is some arbitrary $p$-visible point.
\vskip 10pt
\begin{algorithm}
\label{al:qm1p1}
Let $p\notin {\Cal P}$ and $e_0 \in {\Cal P}$ be $p$-visible. Let ${\Cal E}$ be a codimension-$1$ enumeration scheme for ${\Cal P}$ and set $({\hat A}\mid {\hat b}) \leftarrow {\Cal S}(K_{e_0})$. 
\end{algorithm}
\vskip -5pt
{
\setlength{\jot}{3pt}
\begin{align}
\hbox{set}\qu{.5}&\hbox{\it{done}} \leftarrow \hbox{false}. \notag\\
\hbox{whil}&\hbox{e}\hbox{ not} \qu{.25}  \hbox{\it{done}} \qu{.5}\{ \notag\\
&\hbox{set}\qu{.5} \hbox{\it{done}} \leftarrow \hbox{true}, \qu{.5}e^\prime_0 \leftarrow  \hbox{\it{ESC}\hskip 1pt}({\hat A}, e_0).\label{eq:qm1p2}\\
&\hbox{if}\qu{.5} e_0^\prime \neq e_0 \qu{.5}\{ \notag\\
&\qu{2}\hbox{set}\qu{.5} ({\hat A}\mid {\hat b}) \leftarrow {\Cal S}(K_{e^\prime_0}), \label{eq:qm1p3}\\
&\qu{2}e_0 \leftarrow e_0^\prime,\qu{.5} \hbox{\it{done}} \leftarrow \hbox{false}.\notag\\
&\}\notag \\ 
&\hbox{else if}\qu{.5}\rank({\hat A})>1\qu{.5}\{ \notag\\
&\qu{2}\hbox{for each} \qu{.5} i=0,\dots,|{\Cal E}_{e_0}|-1 \qu{.5}\hbox{while}\qu{.5}\hbox{\it{done}}\qu{.5}\{ \notag\\
&\qu{4}\hbox{set}\qu{.5} ({\bar A}\mid {\bar b}) \leftarrow {\Cal E}_{e_0,i},\label{eq:qm1p4}\\
&\qu{4}e^\prime_0 \leftarrow  \hbox{\it{ESC}\hskip 1pt}({\bar A}, e_0). \label{eq:qm1p5}\\
&\qu{4}\hbox{if}\qu{1} e_0^\prime \neq e_0 \qu{1}\{ \notag\\
&\qu{6}\hbox{set}\qu{.5} ({\hat A}\mid {\hat b}) \leftarrow {\Cal S}(K_{e^\prime_0}), \qu{1} \label{eq:qm1p6}\\
&\qu{6}e_0 \leftarrow e_0^\prime, \qu{.5} \hbox{\it{done}} \leftarrow \hbox{false}.\notag\\
&\qu{4}\}\notag\\
&\qu{2}\}\notag\\
&\}\notag\\
\}\qu{1.4}&\notag\\
\notag
\end{align}
}
\vskip -5pt

If the scheme of Algorithm \ref{al:qm1p1} fails to find an escape within $K_{e_0}$ (as in (\ref{eq:qm1p3})), then it becomes necessary to try faces containing $K_{e_0}$.  The loop containing (\ref{eq:qm1p4}) eventually (applying an enumerator) examines each of the codimension-$1$ spaces containing ${\Cal A}(K_{e_0})$ for the existence of an escape, which is all that is necessary by Theorem \ref{th:qm1p12}. If none is found in either (\ref{eq:qm1p2}) or (\ref{eq:qm1p4}) then the current value of $e_0$ is the point of ${\Cal P}$ minimizing the distance to $p$.

It should be noted that Algorithm \ref{al:qm1p1} will also work if the set of constraints for ${\Cal P}$ is not irredundant. If we view the constraints on ${\Cal P}$ as a set $I\union S$ where $I$ is a set of irredundant constraints and $S$ are superfluous constraints, then any scheme for which the enumerator picks at least what it would pick if $I$ were the only set of constraints (such as one that picks sets of size $r-1$, or sets of size $r-1$ of full rank), will work but be less efficient.\par
\medskip

\noindent\textit{Remark 1.} 
An enumerator is only required to select appropriate subsets of some ${\Cal S}(K_{e_0})$. For an enumerator that just selects subsets of $({\hat A}\mid {\hat b}) = {\Cal S}(K_{e^\prime_0})$ of size $r-1$  where $\rank({\hat A}) = r$  (as in (\ref{eq:qm1p3})), not all selections will necessarily have rank $r-1$, and even for those that do, there is no guarantee that the corresponding affine subspace in (\ref{eq:qm1p4}) corresponds to a codimension-$1$ face $F>K_{e^\prime_0}$, but in that case there will be no escape.  It is also possible that an escape will be found in (\ref{eq:qm1p5}) along a face that has codimension greater than 1. This is largely immaterial, again because eventually either the loop (\ref{eq:qm1p4}) -- (\ref{eq:qm1p6}) will be fully exhausted or (\ref{eq:qm1p3}) represents a facet of ${\Cal P}$ in which case the algorithm will stop there. The trade-off for this enumerator is that there is extra computation for the subsets that don't bear fruit, but the extra computation of ensuring that every selection from $({\hat A}\mid {\hat b})$ will represent a codimension-$1$ face is avoided, or one can take the point of view that this {\it is} the extra computation. 

\par
\medskip
Let $\{e^{(i)}\}$ be the sequence of escapes produced by Algorithm \ref{al:qm1p1} for some starting point $e^{(0)} = e_0$. We will say that this sequence {\it descends} into a face $K<{\Cal P}$ if ${\Cal A}(K) = \{x\in \bbr^n\mid {\hat A}x = {\hat b}\}$ where $({\hat A}\mid {\hat b})$ is the system in (\ref{eq:qm1p3}) and ${\hat A}$ in (\ref{eq:qm1p3}) is distinct from that in (\ref{eq:qm1p2}). This will typically be the case when the escape along a face is through a subface on the boundary of ${\Cal P}$, in which the system of (\ref{eq:qm1p3}) consists of the system in (\ref{eq:qm1p2}) with additional equations. We will say that the sequence {\it ascends} through $K$ if the system for $K$ is that in (\ref{eq:qm1p2}) and no escape is found in (\ref{eq:qm1p3}) but an escape is found at (\ref{eq:qm1p5}) inside the loop along some face of codimension $\geq 1$ containing $K$. If the sequence neither ascends nor descends, then (\ref{eq:qm1p2}), (\ref{eq:qm1p3}) simply moves the point to the projection onto the face whose system is (\ref{eq:qm1p2}), and this will occur just once, followed by an ascent, or the algorithm terminates. This follows because according to the definition, an escape from  $e\in K$ to $e^\prime \in K^\circ$ will always result in $e^\prime = \pi_K$. Next we show convergence of Algorithm \ref{al:qm1p1}, at least in the sense of termination of the sequence of iterates.
\begin{theorem} 
\label{th:qm1p2}
Algorithm \ref{al:qm1p1} completes in at most finitely many steps.
\end{theorem}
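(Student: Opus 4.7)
The plan is to count the escapes performed by Algorithm \ref{al:qm1p1} and separate them into two classes. Call an \emph{inner move} an escape produced by the call to \textit{ESC} in (\ref{eq:qm1p2}), and an \emph{enumeration move} an escape produced by a call to \textit{ESC} in (\ref{eq:qm1p5}) inside the for-loop. By Lemma \ref{le:qm1p1}, or directly from the observation that advancing from $e_0$ a positive fraction of the way toward the orthogonal projection of $p$ onto an affine subspace containing $e_0$ strictly reduces the distance to $p$, every move strictly decreases $\|e_0-p\|$; in particular no iterate is ever revisited.

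First I would bound the enumeration moves. The key observation is that $\textit{ESC}(\hat A,e_0)$, with $\hat A$ representing $\mathcal{A}(K_{e_0})$, returns $e_0$ if and only if $e_0=\pi_{K_{e_0}}$: if $\pi_{K_{e_0}}\neq e_0$, then $\pi_{K_{e_0}}-e_0$ is a nonzero direction in the null space of $\hat A$, and because $K_{e_0}^\circ$ is relatively open in $\mathcal{A}(K_{e_0})$ and contains $e_0$, a small positive step in that direction remains in $K_{e_0}\subset\mathcal{P}$, so $t_0>0$. The enumeration loop is entered only when inner \textit{ESC} returns $e_0$, so every enumeration move starts at a projection point $\pi_K$ for some face $K=K_{e_0}$ of $\mathcal{P}$. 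Since $\mathcal{P}$ has only finitely many faces, there are only finitely many such projection points; since distances strictly decrease, none can serve as the starting point of two distinct enumeration moves; hence only finitely many enumeration moves occur.

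Next I would bound the length of a maximal run of consecutive inner moves. When inner \textit{ESC} produces $e_0'\neq e_0$, the segment $[e_0,\pi_{K_{e_0}}]$ lies in $\mathcal{A}(K_{e_0})$, so by the maximality of $t_0$ the new point $e_0'$ lies in $\mathcal{A}(K_{e_0})\cap\mathcal{P}=K_{e_0}$. Either $e_0'=\pi_{K_{e_0}}\in K_{e_0}^\circ$, in which case $K_{e_0'}=K_{e_0}$ and the next call to inner \textit{ESC} must again return $e_0'$ and trigger enumeration, or else $e_0'$ lies on the relative boundary of $K_{e_0}$, so that $K_{e_0'}$ is a proper subface of $K_{e_0}$ with strictly smaller dimension. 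A maximal chain of consecutive inner moves therefore consists of boundary landings with strictly decreasing face dimension, possibly terminated by a single interior landing, and so has length at most $\dim(\mathcal{P})+1$.

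Combining these two bounds yields a finite upper bound on the total number of moves; since each outer-loop iteration performs at most one move or else halts the algorithm, Algorithm \ref{al:qm1p1} terminates in finitely many iterations. The main obstacle is the dimension-decrease claim for inner moves: one must verify carefully that the inner-\textit{ESC} escape remains in $K_{e_0}$, which relies on $\mathcal{A}(K_{e_0})\cap\mathcal{P}=K_{e_0}$ (equivalently, on the irredundancy of the defining system together with the definition of $\mathcal{S}(K_{e_0})$), and that $e_0\in K_{e_0}^\circ$ really does guarantee $t_0>0$ whenever $\pi_{K_{e_0}}\neq e_0$, which is where the $p$-visibility of the initial point and the absence of implicit equalities enter.
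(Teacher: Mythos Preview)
Your argument is correct and, in fact, cleaner than the paper's. Both proofs hinge on the same key observation: the enumeration loop is entered only when $e_0=\pi_{K_{e_0}}$, so each face can be the launch point of at most one ascent (you phrase this as ``each projection point can start at most one enumeration move''; the paper phrases it as ``after ascending through $F$ the sequence never revisits $F$''). The difference is in how the rest is organized. The paper partitions the iterates according to which relative interior $F^\circ$ they lie in and runs an induction on ${\Cal D}(F,{\Cal P})$, producing the recursive bound (\ref{eq:qm1p7}); this yields a finite but rather loose estimate and requires tracking descents and ascents into every face. You instead split the moves themselves into two kinds, bound the enumeration moves by the number of faces directly, and bound each maximal run of inner moves by $\dim({\Cal P})+1$ via the simple observation that $e_0'\in K_{e_0}$ forces $K_{e_0'}\leq K_{e_0}$, with strict inequality unless $e_0'=\pi_{K_{e_0}}$. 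This dimension-drop argument is implicit in the paper's discussion of ``descent'' preceding the theorem but is not actually used in its proof. Your approach gives the explicit bound of roughly $(\text{number of faces}+1)\cdot\dim({\Cal P})$ total moves, which is both sharper and easier to read off than the recursion (\ref{eq:qm1p7}).
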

\begin{proof}
Let $\{e^{(i)}\}$ be the sequence generated by the algorithm, and note first that the sequence $\{\|e^{(i)}-p\|\}$ is strictly decreasing by construction. Note also that if the sequence ascends through $F$, with say $e^{(i)}\in F$ and $e^{(i+1)} \in G$ where $F$ is a proper face of $G$ (${\Cal A}(G)$ represented by ${\Cal E}_{e_0, i}$) as in (\ref{eq:qm1p5}), then we have $e^{(j)}\notin F$ for any $j>i$, that is, the sequence never revisits $F$. This is because the loop containing (\ref{eq:qm1p4}) is not entered unless there is no escape within $F$, and if not, then $e^{(i)}\in F^\circ$ is in fact $\pi_F$, and minimizes the distance to $p$ over ${\Cal A}(F)$. A subsequent $e^{(j)}\in F$ would improve upon that distance, which is not possible.

Now we proceed by induction on the codimensions of faces of ${\Cal P}$, defining $\Gamma_k$ as the set of faces $F<{\Cal P}$ of codimension-$k$ in ${\Cal P}$, so that e.g., $\Gamma_1$ is the set of facets of ${\Cal P}$. The non-revisitation of faces is the essence of the story, but we will place an explicit upper bound $i_F$ on the number of elements of the sequence contained in $F^\circ$.
If $F\in \Gamma_1$ then $F^\circ$ might possibly contain the initial point $e^{(0)}$, and if $\pi_F\in F^\circ$ then $\pi_F$ will belong to the sequence of escapes if there is any other $e^{(i)}\in F^\circ$. 

In general we note that a descent moves from a point in some $F^\circ$ to some $K^\circ$ where $K$ is a proper face of $F$, since we always start at a point $e_0$ with the system $({\hat A}\mid {\hat b})$ being that satisfied by $e_0$, so that $e_0\in K_{e_0}^\circ$ where $({\hat A}\mid {\hat b}) = {\Cal S}(K_{e_0})$. On the other hand an ascent into $F$ moves from a point interior to a proper face $F_0<F$ into $K^\circ$ where $K$ is potentially any face of $F$ not a subface of 
$F_0$, 
with ${\Cal A}(K)$ as determined in (\ref{eq:qm1p6}).  If $F\in \Gamma_1$ the sequence will not enter $F^\circ$ by descent, and if an ascent results in an element $e^{(i)}\in F^\circ$ then in fact $e^{(i)} = \pi_F$. Although there are potentially $f_F$ ascents into $F^\circ$ where $f_F$ is the number of proper faces of $F$,  any of those will result in $\pi_F$, which has already been accounted for. Since there is no other way for the sequence to enter $F^\circ$, if $F\in \Gamma_1$ we have $i_F \leq 2$.

Let $F\in\Gamma_k$ for some $k>1$ and assume that $G^\circ$ contains at most finitely many elements $i_G$ of the sequence for all $G\in \Gamma_j$ for all $j<k$. As usual, the sequence can originate in $F$, and if $F^\circ$ happens to contain $\pi_F$ that point will be included in the sequence (if  $F^\circ$ has any elements of the sequence) which will account for the usual possible $2$ elements. The sequence $\{e^{(i)}\}$ can enter $F^\circ$ through descent from some $G^\circ$ where $G>F$ and otherwise through an ascent from a proper face of $G$ other than  one containing $F$. Thus for any $G>F$ we will get no more than $f_G-\sigma_{F,G}+ i_G-1$ elements of $F^\circ$ by ascent into $G$ or descent from $G^\circ$, where the number by descent excludes $\pi_G$ because that point will never descend to a proper face of $G$, and  ascent from a proper face of $G$ containing $F$ is also excluded, where $\sigma_{F,G} = |\{G_0<G\mid F\leq G_0\}|$. We can get elements from below by ascent from a proper face of $F$ into $F^\circ$ but as we have noted, any of these will result in $\pi_F$, which has been accounted for. Therefore we have the bound
\begin{equation} i_F \leq 2 +\sum_{j<k, G\in \Gamma_j\atop {F<G}}(f_G-\sigma_{F,G}+ i_G-1)\label{eq:qm1p7} \end{equation}
on the number of elements of the sequence in $F^\circ$, so inductively   $i_F$ is finite for each $F\in \Gamma_k$ for each $k = 1, \ldots, n = \hbox{Dim}({\Cal P})$, and of course for $F\in\Gamma_n$ we have the better bound $i_F = 1$. In any case since ${\Cal P}$ has finitely many faces, this shows that $\{e^{(i)}\}$ is finite.
\end{proof}

\vskip 5pt
\noindent\textit{Remark 2.} The starting point for Algorithm \ref{al:qm1p1} can be any $p$-visible point of ${\Cal P}$, so we can get a starting point $e_0$ from any point of $x\in{\Cal P}$ by taking $e_0$ as the only $p$-visible point on the segment $[x, p]$. Starting at a vertex of ${\Cal P}$ is less desirable, since it immediately forces an ascent unless the vertex is actually the solution.
\section{Experiments}
In this section we demonstrate some properties of Algorithm \ref{al:qm1p1} with  simple low-dimensional examples and then some experimental trials in higher dimensions. These methods are easily scaled for real-world problems.

For the singular problems below, we require a brief digression into the construction of constraints on the linear image of a polyhedron. Let ${\Cal P}$ be a polyhedron and let $\Lambda:\bbr^m\rightarrow\bbr^n$ be a linear transformation with restriction $\Lambda:{\Cal P}\rightarrow \Lambda {\Cal P}$. We will confuse $\Lambda$ and its matrix representation $\Lambda_{n\times m}$ as required. Let ${\Cal P}$ be defined by constraints ${\Cal P}=\{x\in\bbr^m\mid Bx\leq c\}$ for some matrix $B$ and vector $c$. If $\Lambda$ is invertible, one can easily construct constraints on $\Lambda {\Cal P}$ as $\{x\in\bbr^n\mid B\Lambda^{-1}x\leq c\}$. If $\Lambda$ is not invertible, the construction is not quite as straightforward. We will show that constraints can be constructed with varying degrees of `easily', i.e., computational complexity, the indicator of this being substantially more a function of the right corank $\rho_m(\Lambda)$ of $\Lambda$ than the values of $m$ or $n$, where $\Lambda$ is $n\times m$ and where this quantity is defined as $\rho_m(\Lambda) = m-r$ where $r$ is the rank of $\Lambda$, and is the dimension of the null space of $\Lambda$. We will also use the idea of the usual (left) corank $\rho_n(\Lambda) = n-r$. 

Let $\Lambda = P^*LU$ where $L_{n\times r}$ and $U_{r\times m}$ are truncated lower triangular and upper echelon form matrices of rank $r$ where $\Lambda$ has rank $r$ and where $P$ is a permutation matrix, as in \cite{ms-lufam}. The leading nonzero entry in each row of $U$ is $1$ and in particular there is an integer array $\gamma$ of length $r$ such that 
$$U_{ij} = \begin{cases}1&\hbox{ if } j = \gamma_i,\cr 0&\hbox{ if }j<\gamma_i,\cr u_{ij}&\hbox{ if }j>\gamma_i\end{cases}$$ for each $i=0,\ldots, r-1$, $j=0,\ldots,m-1$ with elements $u_{ij}$  determined by the factorization of $\Lambda$. For the transformation $U:{\Cal P}\rightarrow U{\Cal P}={\Cal P}^\prime$ a preimage $x$ of $y\in {\Cal P}^\prime$ can be constructed by back substitution, with
\begin{equation}\quad x_{\gamma_i} \leftarrow y_i -\sum_{m-1\geq k>\gamma_i} U_{ik}x_k \label{eq:qm2p0}\end{equation} 
for $r-1\geq i\geq 0$ in that order of $i$. The calculation (\ref{eq:qm2p0}) does not determine $x_j$ for $j\notin \gamma$. These quantities can be selected arbitrarily, and taken together with the components determined by (\ref{eq:qm2p0}) define a preimage $x\in \bbr^m$ of $y$ under $U$. Let $\nu$ be the complement (if any) of the set $\gamma$ in the set of integers $0,\ldots,m-1$, and define the {\it suspension} of $U$ as the matrix ${\tilde U}$ given by $${\tilde U}_{ij} = \begin{cases}U_{kj}&\hbox{ if } i=\gamma_k\in\gamma\cr
\delta_{ij}&\hbox{ if }i\in\nu\cr\end{cases}$$ for $i,j=0,\ldots,m-1$ where $\delta$ is the Kronecker delta. Note that the suspension ${\tilde U}:\bbr^m\rightarrow \bbr^m$ and that ${\tilde U}$ is invertible. For $x\in\bbr^m$ we can write ${\tilde U}x = {\tilde x}$ where \begin{equation} {\tilde x}_i = \begin{cases}y_k&\hbox{ if }i=\gamma_k\in\gamma,\cr x_i&\hbox{ if }i\in\nu\cr\end{cases}\label{eq:qm2p1}\end{equation} where $y_k$ are components of $Ux$. The constraints on ${\tilde U}{\Cal P} = {\Cal P}^\twoprime$ are of the form 
\begin{equation} B^\prime {\tilde x}\leq c \label{eq:qm2p2} \end{equation} 
where $B^\prime  = B{\tilde U}^{-1}$. There are clearly $\rho_m(\Lambda)$ free parameters in any preimage $x$ of $y\in U{\Cal P}$ and these are unaffected under transformation by ${\tilde U}$, simply being reproduced in the components $x_i$ for $i\in\nu$. Any of the constraints of (\ref{eq:qm2p2}) can be rearranged into an expression involving the free parameters $x_i$ for $i\in \nu$ and the components $y_k$ of $y\in U{\Cal P}$, and the columns of $B^\prime$ can be (or be imagined to be) arranged so that the free parameters come first. Doing this, we apply just $\rho_m(\Lambda)$ steps of Fourier-Motzkin  elimination (aka FME, see \cite{as-tlip}) to eliminate these free parameters, to obtain `corank reduced constraints' of the form $B^\twoprime y\leq c^\prime$ involving only the components of $y$, or no constraints at all (in the case that $B^\twoprime y\leq c^\prime$ is an empty set of inequalities, satisfied by any $y\in \bbr^r$) if at some point of the FME reduction all coefficients of one of the free variables are strictly positive or strictly negative. While the bad news is that Fourier-Motzkin is a doubly exponential time algorithm, the good news is that we can ignore the bad news if the corank is sufficiently small. If $\rho_m(\Lambda) = 0$ we take (\ref{eq:qm2p2}) as the definition of $B^\twoprime y\leq c^\prime$, applying no steps of FME. The following argument applies for the reduced constraints, whether empty or not. 
\begin{theorem}
\label{th:qm2p1}
The corank reduced constraints constitute a complete set of constraints for $U{\Cal P}$. \end{theorem}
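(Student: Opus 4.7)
The plan is to reduce the statement to the well-known correctness of Fourier-Motzkin elimination, which says that after eliminating a variable, the resulting system is the exact projection of the solution set onto the remaining variables. Everything else is setting up a clean correspondence between $U\mathcal{P}$ and the $\gamma$-coordinates of $\tilde{U}\mathcal{P}$.

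First I would establish the following bijective correspondence: for $y \in \bbr^r$, we have $y \in U\mathcal{P}$ if and only if there exists $\tilde{x} \in \bbr^m$ with $B^\prime \tilde{x} \leq c$ and $\tilde{x}_{\gamma_k} = y_k$ for $k = 0, \ldots, r-1$. The forward direction uses $\tilde{x} = \tilde{U}x$ for any $x \in \mathcal{P}$ with $Ux = y$: by (\ref{eq:qm2p1}) the $\gamma$-components of $\tilde{x}$ are exactly the components of $y$, and $B^\prime \tilde{x} = B\tilde{U}^{-1}\tilde{U}x = Bx \leq c$. For the reverse direction, given such a $\tilde{x}$, set $x = \tilde{U}^{-1}\tilde{x}$; then $Bx = B^\prime \tilde{x} \leq c$ so $x \in \mathcal{P}$, and a short check using the definition of $\tilde{U}$ shows that $Ux$ agrees with the $\gamma$-components of $\tilde{x}$, which are $y$.

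Next I would invoke the correctness of Fourier-Motzkin elimination. The reduced constraints $B^{\prime\prime}y \leq c^\prime$ are obtained from (\ref{eq:qm2p2}) by applying $\rho_m(\Lambda)$ elimination steps to the variables $x_i$, $i \in \nu$. By the standard projection property of FME, a vector $y$ satisfies $B^{\prime\prime}y \leq c^\prime$ if and only if there exist values of the $x_i$, $i \in \nu$, such that the full system $B^\prime \tilde{x} \leq c$ holds with $\tilde{x}_{\gamma_k} = y_k$. Chaining this with the correspondence of the previous paragraph yields $y$ satisfies $B^{\prime\prime}y \leq c^\prime$ iff $y \in U\mathcal{P}$, which is exactly the claim that the reduced constraints are complete.

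Finally I would address the two degenerate cases explicitly. When $\rho_m(\Lambda) = 0$, the set $\nu$ is empty, no FME steps are performed, and (\ref{eq:qm2p2}) is directly the complete set of constraints — the correspondence of the first paragraph gives the conclusion immediately. When FME terminates by producing the empty system (every $y \in \bbr^r$ satisfies it), the same FME correctness theorem guarantees that the projection in question is all of $\bbr^r$, and by our correspondence $U\mathcal{P} = \bbr^r$ as well, so completeness again holds vacuously. The main obstacle in all of this is verifying that the $\gamma$-components of $Ux$ match $y$ under $\tilde{U}^{-1}$; this is a bookkeeping check about how the suspension treats the pivot rows versus the identity rows, and once done the rest of the argument is essentially a packaging of a standard FME projection result.
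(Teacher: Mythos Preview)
Your proposal is correct and follows essentially the same approach as the paper: both arguments hinge on the invertibility of the suspension $\tilde{U}$ to identify $\mathcal{P}$ with the solution set of $B'\tilde{x}\leq c$, and then on the projection property of Fourier--Motzkin elimination to characterize which $\gamma$-coordinate vectors $y$ extend to solutions. The only cosmetic difference is that you state the correspondence $y\in U\mathcal{P}\iff\exists\,\tilde{x}$ as a separate lemma and argue directly, whereas the paper folds it into the two implications and phrases the reverse direction as a contradiction.
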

\begin{proof}
If $\rho_m(\Lambda) = 0$ there is nothing to show, so assume $\rho_m(\Lambda) > 0$. Let $y\in U{\Cal P}$ and let $x\in {\Cal P}$ be a preimage of $y$, constructed as in (\ref{eq:qm2p0}) for some choice of free parameters. Then ${\tilde x} = {\tilde U}x$ satisfies the constraints (\ref{eq:qm2p2}). But then $y$ satisfies the reduced constraints $B^\twoprime y\leq c^\prime$ since FME produces equivalent sets of constraints at each step. On the other hand, suppose $y\in \bbr^r$ satisfies the reduced constraints. If $y\notin U{\Cal P}$ then no preimage $x\in\bbr^n$ of $y$ is an element of ${\Cal P}$, so ${\tilde x}$ violates at least one of the constraints (\ref{eq:qm2p2}). But since $y$ satisfies the reduced constraints produced by FME, it is possible to work backward to find values of the eliminated variables for which we do have a solution of (\ref{eq:qm2p2}). This is a contradiction, so in fact $y\in U{\Cal P}$.
\end{proof}
To complete the construction we define constraints $B^\threeprime z\leq c^\twoprime$ as the set of constraints $B^\twoprime(L^*L)^{-1}L^*Pz\leq c^\prime$ (if $B^\twoprime y\leq c^\prime$ is nonempty) together with the auxilliary constraints $\zeta_i^*z\leq 0, \qu{0.5} -\zeta_i^*z\leq 0$  where $\{\zeta_i\}_{i=1}^{\rho_n(\Lambda)}$ is a basis of the null space of $L^*P$. 
\begin{theorem} 
\label{th:qm2p2}
The constraints $B^\threeprime z\leq c^\twoprime$ are a complete set of constraints for $\Lambda{\Cal P}$.\end{theorem}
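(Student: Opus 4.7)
The plan is to reduce $\Lambda{\Cal P} = P^*L(U{\Cal P})$ to a two-step description: apply Theorem \ref{th:qm2p1} to characterize $U{\Cal P}$ by $B^\twoprime y\leq c^\prime$, then push these constraints through the injective linear map $P^*L:\bbr^r\to\bbr^n$ while augmenting by equations cutting $\bbr^n$ down to the range of this map. Concretely, I would show that $z\in \Lambda{\Cal P}$ if and only if $z$ lies in $\hbox{range}(P^*L)$ and the (then uniquely determined) preimage $y\in\bbr^r$ lies in $U{\Cal P}$.

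First I would establish the basic linear-algebraic identities. Because $L$ is $n\times r$ of rank $r$, $L^*L$ is invertible, and because $P$ is a permutation $P^*=P^{-1}$. A short computation gives $(L^*L)^{-1}L^*P\cdot P^*L = I_r$, so $(L^*L)^{-1}L^*P$ is a left inverse of $P^*L$. Using the standard identity that the orthogonal complement of the null space of an operator is the range of its adjoint,
$$\hbox{range}(P^*L) \;=\; \hbox{range}((L^*P)^*) \;=\; (\hbox{null}(L^*P))^\perp,$$
so the auxiliary constraints $\zeta_i^*z\leq 0$ and $-\zeta_i^*z\leq 0$, which are jointly equivalent to $\zeta_i^*z=0$, cut out exactly $\hbox{range}(P^*L)$ as $\{\zeta_i\}$ ranges over a basis of $\hbox{null}(L^*P)$. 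A dimension check ($\dim\hbox{null}(L^*P) = n-r = \rho_n(\Lambda)$) confirms the correct count of auxiliary constraints.

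For the forward inclusion I would take $z\in\Lambda{\Cal P}$ and write $z=P^*Ly$ for some $y\in U{\Cal P}$. The left-inverse formula gives $y=(L^*L)^{-1}L^*Pz$, so Theorem \ref{th:qm2p1} yields $B^\twoprime(L^*L)^{-1}L^*Pz\leq c^\prime$; and $\zeta_i^*z = \zeta_i^*P^*Ly = (L^*P\zeta_i)^*y = 0$ since $\zeta_i\in\hbox{null}(L^*P)$, giving the auxiliary constraints. For the reverse inclusion I would take $z$ satisfying $B^\threeprime z\leq c^\twoprime$; the auxiliary constraints place $z\in\hbox{range}(P^*L)$, so $z=P^*Lw$ for some unique $w$, and applying the left inverse yields $w=(L^*L)^{-1}L^*Pz$. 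The main inequality then reads $B^\twoprime w\leq c^\prime$, so $w\in U{\Cal P}$ by Theorem \ref{th:qm2p1}, and hence $z=P^*Lw \in P^*LU{\Cal P} = \Lambda{\Cal P}$.

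The degenerate case in which $B^\twoprime y\leq c^\prime$ is empty (i.e., trivially satisfied by every $y\in\bbr^r$) needs separate mention: there $B^\threeprime z\leq c^\twoprime$ consists solely of the auxiliary constraints, $U{\Cal P}=\bbr^r$, and the argument collapses to $\Lambda{\Cal P}=P^*L\bbr^r=\hbox{range}(P^*L)$, which is exactly the solution set of those constraints. The main technical care required is bookkeeping around the adjoints and the permutation — verifying that the left-inverse formula genuinely recovers the unique preimage and that the two descriptions of the range of $P^*L$ agree — but I do not expect any deeper obstacle.
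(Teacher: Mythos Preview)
Your proposal is correct and follows essentially the same approach as the paper: use Theorem~\ref{th:qm2p1} to identify $U{\Cal P}$ with the solution set of $B^\twoprime y\leq c^\prime$, then observe that the auxiliary constraints cut out $\hbox{range}(P^*L)$ while the left inverse $(L^*L)^{-1}L^*P$ recovers the unique preimage $w$ so that the remaining inequalities read $B^\twoprime w\leq c^\prime$; the degenerate case is handled identically. Your write-up is simply more explicit about the linear-algebraic identities (the left-inverse formula and the range/null-space duality) than the paper, which compresses these into a single sentence.
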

\begin{proof} If $B^\twoprime y\leq c^\prime$ is nonempty and $z$ satisfies $B^\twoprime(L^*L)^{-1}L^*Pz\leq c^\prime$ then we have $(L^*L)^{-1}L^*Pz=Ux$ for some $x\in {\Cal P}$ by Theorem \ref{th:qm2p1}. If $z$ also satisfies the auxilliary constraints then $z = P^*Lw$ for some  $w\in \bbr^r$ from which $z = P^*LUx\in \Lambda{\Cal P}$. If the reduced constraints are empty then $U{\Cal P} = \bbr^r$, in which case $\Lambda{\Cal P}$ is simply the range of $P^*L$ and we still have $z = P^*LUx\in \Lambda{\Cal P}$ for some $x\in {\Cal P}$. Finally, it is clear that points of $\Lambda{\Cal P}$ satisfy the constraints $B^\threeprime z\leq c^\twoprime$. \end{proof}

\subsection{Constrained Least Squares}
The scheme of  Algorithm \ref{al:qm1p1} is applied to simple examples of constrained least squares, one of which is nonsingular, the others singular. To this end we use pseudorandomly generated sample data which is presented in Table \ref{ta:qm8} of Appendix 1. Our examples minimize the quantity \begin{equation}\|Ax -b\| \label{eq:qm2p3}\end{equation} subject to $x\in {\Cal P}$ where ${\Cal P}$ is the polyhedron $\{x\in\bbr^4\mid x_i\leq 2 \hbox{ for each } i\}$. 

\subsubsection{Nonsingular case}
For the nonsingular case, we minimize (\ref{eq:qm2p3}) where $A$ and $b$ are the matrix and data of Table \ref{ta:qm8}. Minimization of (\ref{eq:qm2p3}) is equivalent to minimizing $\|y-p\|$ for $y\in {\Cal P}^\prime = K^*{\Cal P}$ where $p = K^{-1}A^*b$ where $K$ is the lower triangular Choleski factor of $A^*A$ that is, $A^*A = KK^*$, since we easily have for $y= K^*x$ that \begin{equation}\|Ax-b\|^2 = \|y-p\|^2+\|b\|^2 -\|p\|^2, \label{eq:qm2p4}\end{equation}
which is a reduction possible for many quadratic minimization problems. For instance, it is possible for an objective function of the form ${1\over 2}x^*Qx+c^*x$ if $Q$ is hermitian positive semidefinite and $c$ is in the range of $Q$.

Expressing the constraints of ${\Cal P}$ as $Bx\leq c$, then the constraints on ${\Cal P}^\prime$ are $B^\prime y\leq c$ where $B^\prime = B{K^*}^{-1}$ (inverse matrices are not actually used here and the appropriate linear equations are solved instead). The scheme completes in three steps, with iterates (in $y$ space) depicted in Table \ref{ta:qm1}. The first row of the table depicts the starting point, obtained by following a line from the origin to the first $p$-visible point on ${\Cal P}$ in the direction of $p$ . The last row of the table corresponds to the step consisting of the loop (\ref{eq:qm1p4}), which is effectively the process of deciding that the algorithm is done.
\renewcommand{\arraystretch}{1.2}
\begin{center}
\begin{table}[h!]
\refstepcounter{table}\label{ta:qm1}
\fontsize{8}{10}\selectfont
\centering
{\setlength{\tabcolsep}{.7em}
\begin{tabular}{|c|c|c|c|c|}
 \hline 
 \multicolumn{4}{|c|}{\textbf{Iterate}} & \multicolumn{1}{c|}{\textbf{Distance}} \\ \hline
9.0100\tten{1} & 1.1390\tten{2} & 3.7820\tten{1} & 5.6710\tten{0} & 69.7510 \\ \hline
1.2381\tten{2} & 1.1317\tten{2} & 4.8995\tten{1} & 6.0003\tten{0} & 54.7219 \\ \hline
1.2422\tten{2}  & 1.1293\tten{2} &  5.1272\tten{1} &  8.0725\tten{0} & 54.6331 \\ \hline
1.2422\tten{2}  & 1.1293\tten{2} &  5.1272\tten{1} &  8.0725\tten{0} & 54.6331 \\ \hline
\end{tabular}}
\vskip 3pt
\end{table}
\end{center}
\tcap{\ref{ta:qm1}.}{Iterates for the Nonsingular Case}
The final iterate is converted back to a point in ${\Cal P}$ by multiplication by ${K^*}^{-1}$, with results shown in Table \ref{ta:qm2}, where the constrained result is compared to the free space unconstrained solution.
\vfill \break
\renewcommand{\arraystretch}{1.1}
\begin{center}
\begin{table}[h!]
\refstepcounter{table}\label{ta:qm2}
\fontsize{8}{10}\selectfont
\centering
{\setlength{\tabcolsep}{1em}
\begin{tabular}{|c|c|c|c|c|}
 \hline 
 \textbf{Constraint}&\multicolumn{4}{c|}{\textbf{Solution}}  \\ \hline
${\Cal P}$ & 2.0000 & 2.0000 & 0.7122 & \hphantom{-}0.1321  \\ \hline
None  & 2.9756 & 2.4386 & 1.3741 & -0.2178   \\ \hline
\end{tabular}}
\end{table}
\end{center}
\vskip 5pt
\tcap{\ref{ta:qm2}.}{Solutions for the Nonsingular Case}
\subsubsection{Singular cases}
{\it Singular Case 1. }For the first singular case, we minimize (\ref{eq:qm2p3}) where $A$ and $b$ are merely the first three rows of the matrix and data of Table \ref{ta:qm8} in the appendix, subject to $x\in {\Cal P}$. For this and other singular cases, we drop dimension to the rank of $A$ and solve the usual minimization problem. For this purpose we factor the matrix $A_{m\times n}$ as $A= P^*LU$ where $P$ is a permutation matrix and $L$ and $U$ are matrices of rank $r$ and size $m\times r$ and $r\times n$ respectively,  where $r$ is the rank of $A$, as in \cite{ms-lufam}. 

The conversion still has the form of (\ref{eq:qm2p4}) where now $y = K^*Ux$ and $p = K^{-1}L^*Pb$, and $K$ is now the lower triangular part of the Choleski factorization of  $L^*L$ (in this particular example $L$ is square, but will not be in the general  case). The minimization takes place as usual, but in the lower dimension $r$. In this case, it is necessary to determine the constraints on the polyhedron ${\Cal P}^\prime = U{\Cal P}$, for which we apply Theorem \ref{th:qm2p1}, since we will only need these constraints and not those for $A{\Cal P}$. The corank of $A$ is one, and we remove just one free variable by one step of FME. We have, presenting everything to four places,
\begin{equation}U=\left[\begin{matrix} 1&0.8706&-0.2811&-0.9736\cr
0&1&-0.4255&-1.1509\cr 0&0&1&-0.9172\cr \end{matrix}\right] \label{eq:qm2p5}\end{equation} and
\begin{equation}L=\left[\begin{matrix} -14.6785&0&0\cr
16.8958&-19.7627&0\cr
 -1.0007&-1.9354&-6.9264\cr
 \end{matrix}\right]. \label{eq:qm2p6}\end{equation}
 \vskip 5pt
\noindent The corank reduced constraints on $U{\Cal P} = {\Cal P}^\prime$  are obtained as
\begin{equation}B^\twoprime z\leq c^\prime:\quad\begin{aligned} 16.9735z_0-13.5625z_1-z_2&\leq 36.3762\cr 
32.3847z_0-28.1941z_1+z_2&\leq 72.5577\cr
11.1899z_0-\qu{0.5}9.7419z_1-z_2&\leq 24.8479\cr
\end{aligned} \label{eq:qm2p7} \end{equation}
and the minimization takes place in ${\Cal P}^\twoprime = K^*{\Cal P}^\prime$ with constraints ${\widetilde B}^\threeprime \leq c^\prime$ on $y$ where ${\widetilde B}^\threeprime = B^\twoprime {K^*}^{-1}$. The scheme again completes in three steps. The iterates in ${\Cal P}^\twoprime$ are depicted in Table \ref{ta:qm3}. 
\vfill \break
\renewcommand{\arraystretch}{1.2}
\begin{center}
\begin{table}[h!]
\refstepcounter{table}\label{ta:qm3}
\fontsize{8}{10}\selectfont
\centering
{\setlength{\tabcolsep}{.7em}
\begin{tabular}{|c|c|c|c|}
 \hline 
 \multicolumn{3}{|c|}{\textbf{Iterate}} & \multicolumn{1}{c|}{\textbf{Distance}} \\ \hline
5.0810\tten{1} & 5.6320\tten{0} & 7.6560\tten{0} & 83.3315 \\ \hline
5.1664\tten{1} & 7.6226\tten{0} & 9.6801\tten{0} & 82.0161 \\ \hline
6.0538\tten{1}  & 3.6588\tten{1} &  2.0629\tten{1} &  75.4254 \\ \hline
6.0538\tten{1}  & 3.6588\tten{1} &  2.0629\tten{1} &  75.4254 \\ \hline

\end{tabular}}
\end{table}
\end{center}
\tcap{\ref{ta:qm3}.}{Iterates for the First Singular Case}
Next we recover $z = Ux = {K^*}^{-1}y$ as
\begin{equation}z = \left[\begin{matrix}4.2837 \cr 2.4548 \cr 3.0407 \cr\end{matrix}\right] \label{eq:qm2p8}\end{equation}
and finally recover $x$ by back substitution as in \cite{ms-lufam} noting that $x_3$ is a free parameter, obtaining
\begin{equation}x = \left[\begin{matrix}1.8749-0.1103 x_3 \cr 3.7487 +1.5412 x_3 \cr 3.0409 +0.9172 x_3 \cr x_3 \cr\end{matrix}\right]. \label{eq:qm2p9}\end{equation}
For a problem with potentially more free parameters, after the back substitution phase we obtain a vector such as (\ref{eq:qm2p9}) that must satisfy the constraints in the original problem space. This will be a  polyhedron in $\bbr^{\rho_m(A)}$ that is the result of applying the original problem constraints to a vector such as (\ref{eq:qm2p9}). For the present case this polyhedron is a vertex and there is exactly one value of $x_3$ for which the vector in (\ref{eq:qm2p9}) satisfies the constraints of ${\Cal P}$, namely $x_3 = -1.13466536$ to 8 places. The resulting solution is depicted in Table \ref{ta:qm4}, again compared with the unconstrained singular problem.
\renewcommand{\arraystretch}{1.1}
\begin{center}
\begin{table}[h!]
\refstepcounter{table}\label{ta:qm4}
\label{ta:qm4}
\fontsize{8}{10}\selectfont
\centering
{\setlength{\tabcolsep}{1em}
\begin{tabular}{|c|c|c|c|c|}
 \hline 
 \textbf{Constraint}&\multicolumn{4}{c|}{\textbf{Solution}}  \\ \hline
${\Cal P}$ & 2.0000 & 2.0000 & 2.0000 & -1.1347  \\ \hline
None  & 5.5883 & 0.1383 & 1.8019 & -1.2494   \\ \hline
\end{tabular}}
\end{table}
\end{center}
\vskip 3 pt
\tcap{\ref{ta:qm4}.}{Solutions for the First Singular Case}
\noindent
{\it Singular Case 2. }The second singular case is less an example application of Algorithm \ref{al:qm1p1} than an application of Theorem \ref{th:qm2p1}. For the second case, we reduce the rank of the problem to $2$ similarly to the first case, by taking just the first two rows of $A$ and $b$ of Table \ref{ta:qm8} in the appendix. In this case, applying Theorem \ref{th:qm2p1} results in $U{\Cal P} = \bbr^2$, i.e., the image of $U$ is unconstrained. The solution is that of the unconstrained minimization of $\|K^*z-p\|$ where $p$ is that in Case 1. But $K$ is invertible so we have $z={K^*}^{-1}p = (L^*L)^{-1}L^*Pb$. To four places we have

\begin{equation}U=\left[\begin{matrix} 1&0.8706&-0.2811&-0.9736\cr
0&1&\qu{0.75}3.1533&-4.4333\cr \end{matrix}\right]\label{eq:qm2p10}\end{equation} and
\begin{equation}L=\left[\begin{matrix} -14.6785&0\cr
-1.0007&-1.9354\cr
 \end{matrix}\right].\label{eq:qm2p11}\end{equation}
\noindent
 \vskip 5pt\noindent
In this case we have 

\begin{equation}z = \left[\begin{matrix}6.4186 \cr 11.3591\end{matrix}\right] \label{eq:qm2p12}\end{equation}
and we recover $x$ by back substitution obtaining

\begin{equation}x = \left[\begin{matrix}
-3.4706 +3.0263x_2 -2.8860x_3 \cr
 11.3591 -3.1533x_2 +4.4333x_3\cr 
 x_2 \cr x_3 \cr\end{matrix}\right].\label{eq:qm2p13}\end{equation}
 \vskip 10pt\noindent
The solution polyhedron for values of (\ref{eq:qm2p13}) that satisfy the original constraints is given by

\begin{equation}\begin{aligned} 
x_2-0.9536x_3&\leq 1.8077\cr  
-x_2+1.4059x_3&\leq -2.9681\cr 
\end{aligned}\label{eq:qm2p14} \end{equation}
\vskip 5pt
The constrained solution corresponding to (\ref{eq:qm2p13}) for the single vertex of the polyhedron (\ref{eq:qm2p14}) is presented in Table \ref{ta:qm5} compared to the unconstrained solution. It is the point of the polyhedron (\ref{eq:qm2p14}) that minimizes the distance from a constrained solution (\ref{eq:qm2p13}) to the unconstrained solution, by an application of Algorithm \ref{al:qm1p1}.

\renewcommand{\arraystretch}{1.1}
\begin{center}
\begin{table}[h!]
\refstepcounter{table}\label{ta:qm5}
\fontsize{8}{10}\selectfont
\centering
{\setlength{\tabcolsep}{1em}
\begin{tabular}{|c|c|c|c|c|}
 \hline 
 \textbf{Constraint}&\multicolumn{4}{c|}{\textbf{Solution}}  \\ \hline
${\Cal P}$ & 2.0000 & 2.0000 & -0.6388
 & -2.5655 \\ \hline
None  & 2.2115 & 1.9857 & -0.4311 & -2.4210   \\ \hline
\end{tabular}}
\end{table}
\end{center}
\tcap{\ref{ta:qm5}.}{Solutions for the Second Singular Case}
\subsection{Machine Trials}

To produce a rough sense of the behavior of Algorithm \ref{al:qm1p1}, we apply it to two polytopes in a variety of dimensions, namely the unit cube $\{x\in \bbr^n\mid 0\leq x_i\leq 1 \qu{0.5}\hbox{for}\qu{0.5} i=0,\ldots, n-1\}$ and the simplex $\{x\in \bbr^n\mid x_i\geq 0, \qu{0.5} \hbox{for}\qu{0.5} i=0,\ldots, n-1\qu{0.5}\hbox{and} \qu{0.5} \sum_{i=0}^{n-1}x_i \leq 1\}$. These have comparable numbers of constraints, but the cube has quite a few more vertices in higher dimensions. If the starting point is chosen properly, the algorithm has comparable behavior in both cases. Execution time will of course increase in higher dimensions, since more storage and processing is involved for just the constraints.
\vfill\break
\renewcommand{\arraystretch}{1.1}
\begin{center}
\begin{table}[h!]
\refstepcounter{table}\label{ta:qm6}
\fontsize{8}{10}\selectfont
\centering
{\setlength{\tabcolsep}{.7em}
\begin{tabular}{|l|c|r|r|r|r|}
\hline
\bf Polyhedron & \bf Dimension  & \bf Steps & \bf MSec  & \bf Max MSec   & \bf Ascent \\ \hline
Cube       & 10  & 9    & 0.7   & 2 & 0                      \\ \hline
           & 20  & 16    & 4.7  & 10 & 0                      \\ \hline
           & 50 & 31   & 60.3 & 161 & 0                      \\ \hline
Simplex    & 10  & 11    & 0.7   & 5    & 0                   \\ \hline
           & 20  & 23    & 5.7  & 15   & 0                    \\ \hline
           & 50 & 45   & 67.0 & 394   & 0                    \\ \hline
\end{tabular}}
\end{table}
\end{center}
\tcap{\ref{ta:qm6}.}{Barycenter}
\vskip -5pt
The results are depicted in two tables, where Steps represents the average number of calls to $\hbox{\it{ESC}\hskip 1pt}$ that produced an escape, rounded to the nearest unit, MSec represents the average milleseconds execution of the algorithm rounded to one place, Max MSec represents the maximum millseconds over all trials, and Ascent represents the average number of times an escape along a codimension-$1$ face, i.e., an ascent,  was  found in the inner loop at (\ref{eq:qm1p4}), rounded to the nearest unit. 

Each row of each table represents an average (or max) over 25,000 trials,
not including the construction of the initial point on the boundary of the polytope, where the point $p$ is a pseudorandomly generated point 5 units from the barycenter of the polytope.

Table \ref{ta:qm6} represents trials in which the starting point $e_0$ is the unique $p$-visible point on the line between $p$ and the barycenter of the polytope.

 \begin{center}
\begin{table}[h!]
\refstepcounter{table}\label{ta:qm7}
\fontsize{8}{10}\selectfont
\centering
{\setlength{\tabcolsep}{.7em}
\begin{tabular}{|l|c|r|r|r|r|}
\hline
\bf Polyhedron & \bf Dimension  & \bf Steps & \bf MSec  & \bf Max MSec  & \bf Ascent \\ \hline
Cube       & 10  & 7    & 1.9  & 10  & 6                     \\ \hline
           & 20  & 14   & 24.5  & 58  & 13                     \\ \hline
           & 50 & 36   & 1046.7 & 2164  & 35                     \\ \hline
Simplex    & 10  & 6    & 1.1   & 3  & 4                     \\ \hline
           & 20  & 11    & 13.7  & 38  & 6                     \\ \hline
           & 50 & 25   & 616.8 & 1405   & 12                    \\ \hline
\end{tabular}}
\end{table}
\end{center}
\tcap{\ref{ta:qm7}.}{Vertex}

\vskip -5pt
Table \ref{ta:qm7} represents trials in which the starting point is the first $p$-visible vertex encountered in a pseudorandomly generated sequence of vertices of the polytope. It is clear that the scheme will be forced to search along codimension-$1$ faces in the case of a vertex starting point. This is a deliberately `bad' choice of starting point, since it incorporates more work to start with.
It seems particularly bad for the cube in $50$ dimensions, since there are $2^{50}$ vertices in play.

All computations for the results in this section were done on an ASUS laptop with 2.4Ghz processors, and written in the $C^{\#}$ language.

\vfill\break
\textbf{Appendix 1.}
Table \ref{ta:qm8}
\renewcommand{\arraystretch}{1.1}
\begin{center}
\begin{table}[h!]
\refstepcounter{table}\label{ta:qm8}
\fontsize{7}{9}\selectfont
\centering
{\setlength{\tabcolsep}{1em}
\begin{tabular}{|c|c|c|c|c|c|}
 \hline 
 \multicolumn{4}{|c|}{\textbf{Array}} & \multicolumn{1}{c|}{\textbf{Data}}  \\ \hline
        
	-1.0007 & -2.8066 & -5.8215 & 9.5544 & -28.4075   \\ \hline
	-14.6785 & -12.7791 & 4.1261 & 14.2914 & -94.2159  \\ \hline
	16.8958 & -5.0532 & 3.6602 & 6.2941 &  92.4505  \\ \hline
	-23.4113 & 19.7767 & 16.7373 & 1.1475 &  -51.9379  \\ \hline
	0.6277 & -5.3992 & 18.8794 & -13.4474 & 56.9866  \\ \hline
	-4.7236 & -14.6595 & 22.8559 & 19.2364 & 28.8286  \\ \hline
	-18.9662 & -8.4294 & 10.9331 & -20.0486 & -58.2480  \\ \hline
	-23.7980 & -1.3954 & 6.5481 & -18.9430 & -20.2276  \\ \hline
	20.4657 & -15.6368 & -23.8135 & 7.1174 & -19.8992  \\ \hline
	-19.4998 & 17.6706 & -16.7385 & -7.2366 & -31.6489  \\ \hline
	11.7233 & 23.8646 & 20.4484 & 15.1911 & 125.5107  \\ \hline
	-20.6015 & 3.3139 & -17.3742 & 11.0431 & -27.0742  \\ \hline
	24.9810 & -10.6350 & 9.8897 & 14.5601 & 22.7470  \\ \hline
	0.3252 & 9.0136 & -1.2913 & -20.9941 & 63.5170  \\ \hline
	2.9841 & -11.7714 & 10.6803 & 2.4431 & -32.8363  \\ \hline
	-8.7355 & -6.5857 & -13.2380 & 24.6572 & -16.5939  \\ \hline
	2.6058 & -22.9210 & 5.0925 & -9.8875 & -56.9162  \\ \hline
	-17.8308 & -3.8778 & 20.7420 & -6.1302 & -44.726  \\ \hline
	-2.1119 & -9.0212 & 20.5050 & -16.6813 & 43.5739  \\ \hline
	6.1007 & -0.5265 & -13.3736 & 2.5252 & 27.0157  \\ \hline
	-3.5134 & -6.0381 & -20.7444 & -5.1392 & -91.081  \\ \hline
	-1.5053 & -11.8158 & -6.9947 & 4.6541 & -45.507  \\ \hline
	10.9842 & -8.4481 & -17.0314 & 18.0321 & -10.7101  \\ \hline
	22.9020 & 9.2726 & -5.2228 & 1.5522 & 126.8742  \\ \hline
	-20.7970 & 13.2716 & -20.1611 & 6.1162 & 126.8742  \\ \hline 
	
\end{tabular}}
\end{table}
\end{center}
\tcap{\ref{ta:qm8}.}{Sample Data for Section 2.1}

\bibliographystyle{amsplain}

\end{document}